\theoremstyle{plain}
\newtheorem{theo}{Theorem}[section]
\newtheorem{prop}[theo]{Proposition}
\newtheorem{lemm}[theo]{Lemma}
\theoremstyle{definition}
\newtheorem{defi}{Definition}
\theoremstyle{remark}
\newtheorem*{rema}{Remark}
\numberwithin{equation}{section}
\DeclareMathOperator{\tr}{tr}
\DeclareMathOperator{\re}{Re}
\DeclareMathOperator{\Res}{Res}
\DeclareMathOperator{\Spec}{spec}
\DeclareMathOperator{\Scal}{Scal}
\DeclareMathOperator{\Ric}{Ric}
\DeclareMathOperator{\R}{R}
\DeclareMathOperator{\Vol}{vol}
\DeclareMathOperator{\dvol}{dvol}
\title{Heat trace expansion on manifolds with conic singularities.}
\author{Asilya Suleymanova}
\date{October 2017}
\begin{document}
\maketitle

\begin{abstract}
We derive a detailed asymptotic expansion of the heat trace for the Laplace-Beltrami operator on functions on manifolds with conic singularities, using the Singular Asymptotics Lemma of Jochen Br\"uning and Robert T. Seeley \cite{BS}. In the subsequent paper we investigate how the terms in the expansion reflect the geometry of the manifold.
\end{abstract}

\tableofcontents

\begin{section}{Introduction}

Consider a Riemannian manifold, $(M,g)$, of dimension $m$. The Laplace-Beltrami operator is, by definition, the Hodge Laplacian restricted to smooth functions on $(M,g)$. The space of smooth functions can be completed to the Hilbert space of square integrable functions. The Laplace-Beltrami operator, $\Delta$, is a symmetric non-negative operator and it always has a self-adjoint extension, the Friedrichs extension. We are interested in those Riemannian manifolds where the Friedrichs extension of the Laplace-Beltrami operator has discrete spectrum, $\Spec\Delta$, see Section~\ref{geometric setup}.

Spectral geometry studies the relationship between the geometry of $(M,g)$ and $\Spec\Delta$. One of the main tools of spectral geometry is the {\itshape heat trace} 
\begin{equation}\label{trace}
\tr e^{-t\Delta}=\sum_{\lambda\in\Spec\Delta}e^{-t\lambda}.
\end{equation}

For compact Riemannian manifolds $(M,g)$, the problem of finding geometric information from the eigenvalues of the Laplace-Beltrami operator and the Hodge Laplacian has been extensively studied, see e.g. \cite{G2} and the references given there. On closed $(M,g)$ there is an asymptotic expansion
\begin{equation}\label{smooth expansion}
\tr e^{-t\Delta}\sim_{t\to+0}(4\pi t)^{-\frac{m}{2}}\sum_{j=0}^{\infty} a_jt^j,
\end{equation}
where $a_j\in\mathbb{R}$.
In principle, every term in (\ref{smooth expansion}) can be written as an integral over the manifold of a local quantity. Namely, 
\begin{align}\label{smooth terms}
a_j=\int_M u_j\dvol_M,
\end{align}
where $u_j$ is a polynomial in the curvature tensor and its covariant derivatives, see Section~\ref{section local expansion}. In particular, $u_0=1$ and $u_1=\frac16\Scal$, where $\Scal$ is the scalar curvature of $(M,g)$. The bigger $j$, the more complicated the calculation of $u_j$. Sometimes we write $u_j(p)$ to indicate that it is a local quantity, i.e.~it depends on a point $p\in M$.

There are many examples of manifolds that are {\itshape isospectral}, i.e.~have the same spectrum of $\Delta$, but are not isometric, see the survey \cite{GPS}. However, it remains very interesting to study to what extent the geometry of $(M,g)$ can be determined from $\Spec\Delta$.

In this article we study the heat trace expansion of the Friedrichs extension of the Laplace-Beltrami operator on a non-complete smooth Riemannian manifold $(M,g)$ that possesses a conic singularity. By this we mean that there is an open subset $U$ such that $M\setminus U$ is a smooth compact manifold with boundary $N$. Furthermore, $U$ is isometric to $(0,\varepsilon)\times N$ with $\varepsilon>0$, where the {\itshape cross-section} $(N,g_N)$ is a closed smooth manifold, and the metric on $(0,\varepsilon)\times N$ is
\begin{align}\label{conic metric}
g_{\text{conic}}=dr^2+r^2g_{N}, \;\;\; r\in(0,\varepsilon).
\end{align}

\begin{figure}[!ht] %[H] is used so that the figure is strictly BEFORE the stuff which is going after it
\centering
\begin{tikzpicture}[scale=1]
    % ?????
    \draw (0,0) -- (3,1);
    \draw (0,0) -- (3,-1);
    \draw (0,0) circle [radius=0.07];
    \draw[dashed] (2.45,0.8) arc (90:270:0.4cm and 0.8cm);
    \draw (2.4,-0.8) arc (-90:90:0.4cm and 0.8cm);
    \draw (2.35,-1.1) node {$N$};
    \draw[decorate,decoration={brace,raise=3pt,amplitude=5pt}, thick] (0,2)-- (3,2);
    \draw (1.5,2.6) node {$U$};

    % ???????????
    \draw (4,0) circle [x radius=0.4cm, y radius=1.2cm];
    \draw (4,1.2) .. controls (12,4) and (12,-4) .. (4,-1.2);
    \draw (3.9,-1.5) node {$N$};
    \draw[decorate,decoration={brace,raise=3pt,amplitude=5pt}, thick] (3.8,2)-- (10,2);
    \draw (6.9,2.6) node {$M \setminus U$};

    % ????? ? ???????????
    \draw (5.3,0) to[bend left] (6.3,0);
    \draw (5.2,.05) to[bend right] (6.4,.05);

    \draw (7.3,0.8) to[bend left] (8.3,0.8);
    \draw (7.2,0.85) to[bend right] (8.4,0.85);

    \draw (7.3,-0.8) to[bend left] (8.3,-0.8);
    \draw (7.2,-0.75) to[bend right] (8.4,-0.75);

    % ???
    \draw[->] (-0.5,-2.3) -- (3.8,-2.3);
    \draw (0,-2.38) -- (0,-2.22); 
    \draw (2.45,-2.38) -- (2.45,-2.22);
    \draw (0,-2.6) node {$0$}; 
    \draw (2.45,-2.6) node {$\epsilon$};
    \draw (3.9,-2.6) node {$r$}; 

\end{tikzpicture}
\caption{Manifold with a conic singularity.}
\end{figure}
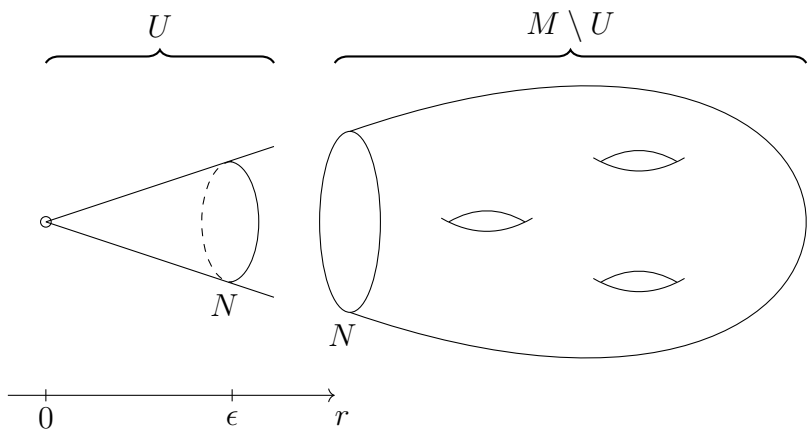

The existence of the heat trace expansion of the Friedrichs extension of the Hodge Laplacian on the differential forms on manifolds with conic singularities was proven by Jeff Cheeger \cite[Section~5]{Ch}. Jochen Br\"uning and Robert Seeley in \cite{BS} and \cite{BS2} developed a general method for showing the existence of the heat trace expansion of second order elliptic differential operators. A fundamental feature of the expansion on manifolds with conic singularities is that a logarithmic term can appear (see also \cite[(4.6)]{BKD}), while only power terms can appear in (\ref{smooth expansion}) on a smooth closed manifold. It was not fully understood how a singularity contributes to the coefficients in the expansion. In this article we used the local heat kernel expansion, see Section~\ref{section local expansion}, and then the Singular Asymptotics Lemma from \cite[p.\,372]{BS2}, see Section~\ref{regularized integrals}, to compute the terms in the heat trace expansion on a manifold with a conic singularity.

The negative power terms in the expansion do not have any contribution from the singularity and are computed for a bounded cone with different boundary conditions by Michael Bordag, Klaus Kirsten and Stuart Dowker in \cite[(4.7)--(4.8)]{BKD}. The first power term in the expansion that is affected by the singularity is the constant term. The expression of the constant term in \cite[Theorem~4.4]{Ch2} and \cite[(4.5)]{BKD} involves residues of the spectral zeta function of the Laplace-Beltrami operator as well as the finite part of the spectral zeta function at a particular point $s\in\mathbb{C}$. In a more general setup in \cite[(7.22)]{BS2} the constant term is expressed as the infinite sum of residues of the spectral zeta function of a certain operator on $(N,g_N)$ plus the analytic continuation of the zeta function at a particular point. Here we show that the sum in the expression of the constant term is finite for the case of conic singularities. 

Since the manifold $(M,g)$ is non-compact, it may happen that the Laplace-Beltrami operator on $(M,g)$ has many self-adjoint extensions. To be able to apply the Singular Asymptotics Lemma we need the operator to satisfy the scaling property, see Section~\ref{section Laplace on infinite cone}. It is known that the Friedrichs extension has this property and we restrict our attention to this particular self-adjoint extension. We now present the main theorem.

\begin{theo}\label{main theorem}
Let $\Delta$ be the Laplace-Beltrami operator on smooth functions with compact support on $(M,g)$. If $m\geq4$, then $\Delta$ is essentially self-adjoint operator, otherwise we consider the Friedrichs extension of $\Delta$. Denote the self-adjoint extension of the Laplace-Beltrami operator by the same symbol~$\Delta$. Then
\begin{equation}\label{main expansion}
\tr e^{-t\Delta}\sim_{t\to0+}(4\pi t)^{-\frac{m}{2}}
\sum_{j=0}^{\infty}\tilde{a}_jt^j
+b
+c\log t,
\end{equation}
\begin{itemize}
\item[(a)]where
\begin{align*}
\tilde{a}_j=
\begin{cases}
\int_Mu_j\dvol_M \text{ for } j\leq m/2-1, \\
\fint_Mu_j\dvol_M \text{ for } j>m/2-1.
\end{cases}
\end{align*}
Above $\fint$ denotes the regularized integral, which we define in Section~\ref{regularized integrals}, of local quantities $u_j$ in (\ref{smooth terms}).
\item[(b)] The constant term $b$ in general cannot be written in terms of local quantities, and is given by
\begin{align*}
b
=&-\frac{1}{2}\Res_0\zeta^{\frac{m-2}{2}}_N(-1/2)
+\frac{\Gamma'(-\frac1 2)}{4\sqrt{\pi}}\Res_1\zeta^{\frac{m-2}{2}}_N(-1/2)\\
&-\frac1 4\sum_{1\leq j\leq m/2}j^{-1}B_{2j}\Res_1\zeta^{\frac{m-2}{2}}_N(j-1/2),
\end{align*}
where $\zeta^l_N(s)=\sum_{\lambda\in\Spec\Delta_N}(\lambda+l^2)^{-s}$ is the spectral zeta function shifted by $l$. The constants $B_{2j}$ are the Bernoulli numbers, $\Res_0f(s_0)$ is the regular analytic continuation of a function $f(s)$ at $s=s_0$, and $\Res_1f(s_0)$ is the residue of the function $f(s)$ at $s=s_0$.
\item[(c)] The logarithmic term is given by
\begin{equation}
c=
\begin{cases}
\frac{1}{2(4\pi)^{\frac{m}{2}}}\sum_{k=0}^{\frac{m}{2}}(-1)^{k+1}\frac{(m-2)^{2k}}{4^k k!}a^N_{\frac{m}{2}-k}, &\text{ for }m \text{ -- even},\\
0, &\text{ for }m \text{ -- odd}.
\end{cases}
\end{equation}
\item[(d)] If $c=0$ then $\tilde{a}_{m/2}=\int_Mu_{m/2}\dvol_M$ does not have a contribution from the singularity.
\end{itemize}
Here $\Delta_N$ is the Laplace-Beltrami operator on the cross-section $(N,g_N)$ and $a^N_j, j\geq0$ denote the coefficients in the heat trace expansion (\ref{smooth expansion}) on $(N,g_N)$.
\end{theo}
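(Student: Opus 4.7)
The plan is to split $\tr e^{-t\Delta}$ into a contribution from the smooth part of $M$ bounded away from the cone tip and a contribution from a collar neighborhood of the singularity, and to treat each using different tools. For the smooth part I would use the standard local heat kernel expansion recalled in Section~\ref{section local expansion}, which gives a pointwise expansion of the diagonal heat kernel in terms of the curvature densities $u_j(p)$; these are integrable over any region bounded away from $r=0$ and produce the $(4\pi t)^{-m/2}\sum a_j t^j$ shape with the expected coefficients. For the cone part the essential tool is the Singular Asymptotics Lemma of Br\"uning and Seeley, which handles exactly the integrals that arise when one pulls the local expansion back through the conic metric on $(0,\varepsilon)\times N$: integrals of the form $\int_0^\varepsilon \phi(r,r/\sqrt{t})\,dr$ whose $t\to 0^+$ behavior is controlled by the regularization data of $\phi$.

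The first technical step is to exploit the scaling invariance of the Friedrichs extension on the infinite model cone (Section~\ref{section Laplace on infinite cone}) to write the on-diagonal heat kernel on the cone as a sum over $\lambda\in\Spec\Delta_N$ of Bessel-type radial kernels with index $\nu_\lambda=\sqrt{\lambda+(m-2)^2/4}$. This is precisely why the shifted spectral zeta function $\zeta_N^{(m-2)/2}$ enters (b). A cut-off near $r=\varepsilon$ together with this spectral decomposition reduces the singular-part computation to one-dimensional integrals in $r$, each of the shape required for SAL. Applying SAL term by term, the negative powers of $t$ up to $t^{m/2-1}$ come entirely from absolutely convergent integrals of $u_j$, while for $j>m/2-1$ the naive integrals diverge at $r=0$ and must be replaced by the regularized integrals $\fint_M u_j\,\dvol_M$, giving (a).

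The constant and logarithmic outputs of SAL are then matched with the Laurent expansion of $\zeta_N^{(m-2)/2}(s)$ at the relevant half-integer points, producing (b) and (c); the Bernoulli numbers $B_{2j}$ enter through the Euler--Maclaurin-type expansion built into SAL. The finiteness of the sum in (b), which is one of the novel points highlighted in the introduction, follows from the fact that on the closed manifold $N$ only finitely many residues $\Res_1 \zeta_N^{(m-2)/2}(j-1/2)$ with $1\leq j\leq m/2$ can be nonzero, which is a statement purely about the smooth heat trace on $N$ and the Bessel-function weights. Part (d) then follows by inspecting the SAL output at the critical order $j=m/2$: the $\log t$ coefficient and the singular part of the regularization of $\int u_{m/2}$ are tied together, so $c=0$ forces the corresponding integral to converge absolutely, whence $\tilde a_{m/2}$ equals the honest integral $\int_M u_{m/2}\,\dvol_M$.

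The main obstacle I expect is the careful interchange of the infinite sum over $\Spec\Delta_N$ with the $t\to 0$ limit inside SAL; this requires uniform asymptotics of the modified Bessel functions in both index and argument, together with a precise matching of the finitely many nonvanishing $\zeta_N$-residues against the coefficients $(m-2)^{2k}/(4^k k!)\,a^N_{m/2-k}$ appearing in (c). The bookkeeping that recognizes these residues as repackagings of the cross-section heat coefficients $a^N_j$ is what makes the final formulas for $b$ and $c$ explicit rather than implicit.
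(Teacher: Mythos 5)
Your proposal follows essentially the same route as the paper: decomposition by a radial cutoff, the local heat-kernel expansion on the regular part, the Bessel-function resolvent kernel with index $\nu=\sqrt{\lambda+(m-2)^2/4}$ on the model cone, verification of the SAL hypotheses via the scaling property of the Friedrichs extension, and the repackaging of the SAL output in terms of the shifted zeta function $\zeta_N^{(m-2)/2}$ and the cross-section heat coefficients (with the non-locality of $b$ and part (d) handled just as you indicate). The one small imprecision is that the Bernoulli numbers do not come from an Euler--Maclaurin step inside SAL itself but from the Stirling expansion of the Gamma-function ratio $\Gamma(\nu-s+1)/\Gamma(\nu+s)$ that appears when the regularized integral of $\zeta^{2d-1}\left(-\tfrac{1}{2\zeta}\partial_\zeta\right)^{d-1}I_\nu(\zeta)K_\nu(\zeta)$ is evaluated via the Mellin transform; otherwise the plan matches the paper's proof.
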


Above we need to regularize the integrals, because in general $\int_Mu_j\dvol_M$ diverges. If for some $j\geq0$ the integral converges, i.e. $\fint_Mu_j\dvol_M=\int_Mu_j\dvol_M$, then in this case $\tilde{a}_j$ is equal to $a_j$ from (\ref{smooth terms}).

Theorem~\ref{main theorem} allows to connect the coefficients in (\ref{main expansion}) to the geometry of $(M,g)$. It is now natural to pose the following question: given the coefficients in (\ref{main expansion}), what information about singularities can be obtained? The idea is to compare the expansion (\ref{main expansion}) to the expansion on a smooth compact manifold (\ref{smooth expansion}). This will be done in the subsequent paper.

This article is organized as follows. In Section~\ref{preliminaries}, we present a geometric setup, then define regularized integrals following \cite[Section~2.1]{L}, and state the main lemma from \cite[p.\,372]{BS2}, the Singular Asymptotics Lemma. In Section~\ref{computations}, we prove that the conditions of the Singular Asymptotics Lemma are satisfied in our case. We then apply it to the expansion of the trace of the resolvent. In Section~\ref{heat trace expansion}, we compute the coefficients in the heat trace expansion for a manifold with conic singularities. In Section~\ref{proof of main theorem}, we assemble the proof of Theorem~\ref{main theorem}.

\end{section}

\begin{section}{Preliminaries}\label{preliminaries}
\begin{subsection}{The Laplace-Beltrami operator on an open manifold}\label{geometric setup}

In this section we present some basic notions and theorems about operators on Hilbert spaces, following \cite{W}. Then we show how this results apply to the Laplace-Beltrami operator on a manifold with conic singularity.

Let $H_1$, $H_2$ be Hilbert spaces. Let $A$ be an operator from $H_1$ to $H_2$, and $B$ be an operator from $H_2$ to $H_1$.  The operator $B$ is called a {\itshape formal adjoint} of $A$ if we have
$$
\left<h,Ag\right>=\left<Bh,g\right> \text{ for all }g\in D(A), h\in D(B),
$$
where $D(A),D(B)$ are the domains of the operators $A,B$. We denote formal adjoint of $A$ by $A^{\dag}$.

Let $A$ be an operator on a Hilbert space $H$. The operator $A$ is called {\itshape symmetric} if for any elements $h,g$ from its domain we have $\left<h,Ag\right>=\left<Ah,g\right>$. A densely defined symmetric operator $A$ is called {\itshape self-adjoint} if it is equal to its adjoint $A=A^\dag$ and {\itshape essentially self-adjoint} if its closure is equal to its adjoint. An operator $B$ is called an {\itshape extension} of $A$ if we have 
$$
D(A)\subset D(B) \text{ and } Ah=Bh \text{ for } h\in D(A).
$$
If $A$ is a symmetric operator, then $A\subset A^\dag$, \cite[p.72]{W}. For every symmetric extension $B$ of $A$ we have $A\subset B\subset B^\dag\subset A^\dag$. If $B$ is self-adjoint, then $A\subset B=B^\dag\subset A^\dag$.

A symmetric operator $A$ on the Hilbert space $H$ is said to be {\itshape bounded from below} if there exists $a\in\mathbb{R}$ such that $\left<h,Ah\right>\geq a\Vert h\Vert^2$ for all $h\in D(A)$. Every $a$ of this kind is called a {\itshape lower bound}. If zero is a lower bound of $A$, then $A$ is called {\itshape non-negative}.

\begin{theo}[Friedrichs extension, {\cite[Theorem~5.38]{W}}]\label{exitence Friedrichs extension}
A non-negative densely defined symmetric operator $A$ on a Hilbert space $H$ has a non-negative self-adjoint extension.
\end{theo}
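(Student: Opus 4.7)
The plan is to build the extension via a new inner product tailored to $A$. On $D(A)$, set
$(h,g)_+ := \langle h, Ag\rangle + \langle h,g\rangle$.
Since $A$ is symmetric and non-negative, this is a genuine inner product, and it dominates the ambient one: $\|h\|_+^2 \geq \|h\|^2$. Let $H_+$ denote the completion of $D(A)$ under $\|\cdot\|_+$. The inclusion $D(A)\hookrightarrow H$ is norm-decreasing and thus extends to a bounded linear map $\iota\colon H_+\to H$. The first delicate point is that $\iota$ is injective: if $\iota(h)=0$, choose $h_n\in D(A)$ with $h_n\to h$ in $H_+$ and $h_n\to 0$ in $H$; then for every $g\in D(A)$,
$(h,g)_+=\lim_n\bigl(\langle h_n,Ag\rangle+\langle h_n,g\rangle\bigr)=0$,
and density of $D(A)$ in $H_+$ gives $h=0$. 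Via $\iota$, I would identify $H_+$ with a dense linear subspace of $H$.

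Next I would single out the Friedrichs domain
$D(A_F) := \{h\in H_+ : g\mapsto (h,g)_+ \text{ is } \|\cdot\|\text{-continuous on } D(A)\}$,
and for $h\in D(A_F)$ define $A_F h := f_h - h$, where $f_h \in H$ is the Riesz representative obeying $(h,g)_+ = \langle f_h,g\rangle$ for all $g\in D(A)$. A direct check using the definition of $(\cdot,\cdot)_+$ shows $A_F h = Ah$ for $h\in D(A)$, so $A_F$ extends $A$. Symmetry of $A_F$ is inherited from symmetry of $(\cdot,\cdot)_+$, and non-negativity follows from $\langle A_F h,h\rangle = (h,h)_+ - \|h\|^2 \geq 0$.

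The main obstacle is self-adjointness, which I would establish by showing that $A_F+I$ is a bijection $D(A_F)\to H$. Surjectivity is where $H_+$ really pays off: given $f\in H$, the functional $g\mapsto \langle f,g\rangle$ is bounded on $(H_+,\|\cdot\|_+)$ since $\|g\|_+\geq\|g\|$, so Riesz in $H_+$ furnishes a unique $h\in H_+$ with $(h,g)_+=\langle f,g\rangle$ for all $g\in H_+$; in particular this holds for every $g\in D(A)$, placing $h\in D(A_F)$ with $(A_F+I)h = f$. Injectivity is immediate: $(A_F+I)h=0$ forces $(h,h)_+=0$. Thus $A_F+I$ admits an everywhere-defined inverse, bounded by $1$ because $\|h\|\leq\|h\|_+\leq\|(A_F+I)h\|^{1/2}\|h\|^{1/2}$. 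Combined with symmetry of $A_F$, the standard criterion (a symmetric operator with a real point in its resolvent set is self-adjoint) delivers self-adjointness of $A_F$, completing the proof.
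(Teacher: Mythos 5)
Your construction is the standard Friedrichs energy-space argument, and it is correct; the paper itself gives no proof, citing Weidmann (Theorem~5.38), whose proof proceeds along essentially the same lines via the form inner product $(h,g)_+$ and its completion. The only step you leave implicit is that the identity $(h,g)_+=\langle f_h,g\rangle$, established for $g\in D(A)$, must be extended by $\|\cdot\|_+$-density to all $g\in H_+$ before you may take $g=h$ (for non-negativity and for the injectivity of $A_F+I$) or $g=k\in D(A_F)$ (for symmetry); since both sides are $\|\cdot\|_+$-continuous in $g$, this is immediate and the proof stands.
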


Let $(M,g)$ be a non-complete smooth Riemannian manifold that possesses a conic singularity. Consider the space of smooth functions with compact support $C^{\infty}_c(M)$ on $(M,g)$, and the space of differential one-forms with compact support $\lambda^1_c(M):=C^{\infty}_c(\Lambda T^*M)$. There are certain first order differential operators defined between these spaces, exterior derivative $d:C^{\infty}_c\to\lambda^1_c(M)$ and its formal adjoint $d^{\dag}=-*d*:\lambda^1_c(M)\to C^{\infty}_c(M)$, where $*$ is the Hodge-star operator on the differential forms on $(M,g)$. Furthermore,  the operator $\Delta:=d^{\dag}d$, defined on the smooth functions with compact support, is called the Laplace-Beltrami operator on $(M,g)$.

\begin{prop}
The operator $\Delta$ is densely defined in $L^2(M)$, symmetric and non-negative.
\end{prop}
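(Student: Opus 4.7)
The plan is to verify the three assertions in turn---dense definition, symmetry, non-negativity---treating $\Delta = d^\dagger d$ as a composition of first-order operators and leveraging the formal adjoint relationship of $d$ and $d^\dagger$ stated in the preamble.

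First I would establish density. Although $M$ fails to be compact (the conic tip has been removed), it is a smooth paracompact Riemannian manifold without boundary, so the standard partition-of-unity and mollification arguments show that $C^{\infty}_c(M)$ is dense in $L^2(M,\dvol_M)$; hence the domain of $\Delta$ is dense.

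Symmetry and non-negativity both follow from the formal adjoint identity
\[
\langle d^\dagger\omega, h\rangle_{L^2(M)} = \langle \omega, dh\rangle_{L^2(\Lambda^1 M)}, \qquad h \in C^{\infty}_c(M),\; \omega \in \lambda^1_c(M).
\]
Applied twice, for $f, g \in C^{\infty}_c(M)$,
\[
\langle \Delta f, g\rangle = \langle d^\dagger df, g\rangle = \langle df, dg\rangle = \langle f, d^\dagger dg\rangle = \langle f, \Delta g\rangle,
\]
which yields symmetry; specialising $g = f$ gives $\langle \Delta f, f\rangle = \|df\|_{L^2(\Lambda^1 M)}^2 \geq 0$, so $\Delta$ is non-negative.

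The only point that genuinely involves the geometry of $M$---and which I expect to be the main (though still routine) obstacle---is the justification of the formal adjoint identity in the conic setting, since integration by parts could in principle produce a boundary term near the tip. However, compact support in the non-compact manifold $M$ forces the supports of $f, g$ and $\omega$ to be bounded away from the singularity: there exists $\delta > 0$ with everything vanishing on $(0,\delta)\times N$. On the complementary region $M$ is smooth and the supports are relatively compact, so the identity reduces to the usual Stokes computation, with no boundary contributions because the integrands vanish at the edge of the support.
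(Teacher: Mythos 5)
Your proposal is correct and follows essentially the same route as the paper: density of $C^\infty_c(M)$ in $L^2(M)$ gives the densely defined claim, and the identity $\langle \Delta f,g\rangle=\langle df,dg\rangle$ gives symmetry and non-negativity. You are in fact slightly more careful than the paper in spelling out symmetry for general $f,g$ and in noting that compact support away from the conic tip is what rules out boundary terms in the integration by parts.
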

\begin{proof}
Since $C^\infty_c(M)$ is dense in $L^2(M)$, the operators $d$ and $d^{\dag}$ are densely defined respectively in $L^2(M)$ and $L^2(\Lambda T^*M)$. Hence $\Delta$ is densely defined in $L^2(M)$. Let $f\in C^{\infty}_c(M)$, then
$$
(\Delta f,f)=(d^{\dag}df,f)=(df,df)=(f,\Delta f)\geq0.
$$
From this follows that $\Delta$ is symmetric and non-negative.
\end{proof}

By Theorem~\ref{exitence Friedrichs extension}, the operator $\Delta$ admits a self-adjoint extension. In Section~\ref{proof of main theorem}, we observe that for $\dim M=m<4$ there can be many self-adjoint extensions of $\Delta$, if this is the case, we choose the Friedrichs extension $\Delta^F$, which we denote simply by $\Delta$. The reason we choose the Friedrichs extension is that it satisfies the scaling property, Section~\ref{section Laplace on infinite cone}, which we need in Lemma~\ref{integrability condition}.

In the next sections, we discuss an asymptotic expansion of the heat trace of the Laplace-Beltrami operator. For this purpose we deal with the operator separately on a neighbourhood $(U,g_{\text{conic}})$ and on the regular part $(M\setminus U,g)$. For the restriction $\Delta|_{M\setminus U}$ of the Laplace-Beltrami operator $\Delta$ to the regular part, we use the methods applicable for a compact manifold. As for the restriction $\Delta|_U$, we first extend $(U,g_{\text{conic}})$ to an infinite cone $((0,+\infty)\times N,g_{\text{conic}})$, then extend the Laplace-Beltrami operator to the infinite cone and multiply the restriction $\Delta|_{(0,+\infty)\times N}$ by a function with the support near the tip of a cone and use the Singular Asymptotics Lemma, Lemma~\ref{SAL}. To glue the result on the infinite cone and the result on the regular part, we use a partition of unity. We observe that the heat trace expansion does not depend on $\varepsilon$ in (\ref{conic metric}).

\end{subsection}

\begin{subsection}{Spaces \texorpdfstring{$L^2((0,\varepsilon)\times N)$}{Lg} and \texorpdfstring{$L^2((0,\varepsilon),L^2(N))$}{Lg}}

In this subsection we introduce spaces that we consider in Section~\ref{computations}, and construct a bijective unitary map $(\ref{bijective map})$ between these spaces.

Let $E$ be a vector bundle over a smooth manifold $M$. Denote by \\*$C^\infty(M, E)$ space of smooth sections of $E$ over $M$.

Let $I:=(0,\varepsilon)$, where $0<\varepsilon\leq+\infty$, and let $X$ be any set. Define
\begin{equation}\label{smooth map}
C^\infty(I,X):=\{\varphi:I\to X\mid\varphi \text{ is smooth }\}.
\end{equation}

Consider a manifold $N$ and a projection map
$$
\pi_N:I\times N\to N.
$$

\begin{lemm}\label{lemma isomorphic spaces}
Let $G$ be a vector bundle over $N$. Then a section of pull-back bundle $\pi^*_NG$ at every $r\in I$ is a section of $G$, i.e. the following spaces are isomorphic
$$
C^{\infty}(I\times N,\pi^*_NG)
\simeq C^{\infty}(I,C^{\infty}(N,G)).
$$
\end{lemm}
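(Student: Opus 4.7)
The plan is to construct the natural bijection explicitly and reduce smoothness in both directions to a local statement about ordinary smooth functions of two groups of variables.

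First I would define the forward map $\Phi\colon C^{\infty}(I\times N,\pi^*_N G)\to C^{\infty}(I,C^{\infty}(N,G))$ using the canonical identification $(\pi^*_N G)_{(r,x)}\cong G_x$: for a section $s$, set $\Phi(s)(r)(x):=s(r,x)\in G_x$. For each fixed $r\in I$, the inclusion $i_r\colon N\hookrightarrow I\times N$, $x\mapsto(r,x)$, is smooth, and $\Phi(s)(r)=i_r^{\ast}s$ is a smooth section of $i_r^{\ast}(\pi^*_N G)=G$, so $\Phi(s)(r)\in C^{\infty}(N,G)$. The inverse $\Psi$ is defined by $\Psi(\varphi)(r,x):=\varphi(r)(x)\in G_x=(\pi^*_N G)_{(r,x)}$. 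At the level of set-theoretic maps, $\Phi$ and $\Psi$ are visibly mutual inverses, so the entire content of the lemma is the assertion that both sides send the distinguished ``smooth'' subclasses to each other.

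To verify this, I would work in local trivializations. Cover $N$ by open charts $V_\alpha$ over which $G$ is trivialized as $V_\alpha\times\mathbb{R}^k$; then $\pi^*_N G$ is trivialized over $I\times V_\alpha$ in the obvious way. A section $s$ of $\pi^*_N G$ corresponds locally to a $k$-tuple of smooth functions $s_1,\dots,s_k\in C^{\infty}(I\times V_\alpha)$, and $\Phi(s)(r)$ corresponds to the tuple $(s_1(r,\cdot),\dots,s_k(r,\cdot))$ on $V_\alpha$. Thus the assertion reduces, per chart, to the classical calculus fact that a function $f\in C^{\infty}(I\times V)$ with $V\subset\mathbb{R}^{m-1}$ open is smooth in the joint variables if and only if the curve $r\mapsto f(r,\cdot)$ is smooth into $C^{\infty}(V)$ endowed with its Fr\'echet topology of uniform convergence on compacta of all derivatives; the $r$-derivatives of the curve are simply the partial derivatives $\partial_r^k f(r,\cdot)$, with continuity of all orders in $r$ following from uniform continuity of smooth functions on compact subsets of $I\times V$. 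I would then patch the local equivalences across overlaps, noting that both $\Phi$ and $\Psi$ commute with restriction to open subsets of $N$, so compatibility on overlaps is automatic and no partition of unity is needed beyond checking smoothness chart-by-chart.

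The main obstacle, and really the only subtle point, is fixing the meaning of ``smooth'' for a map $I\to C^{\infty}(N,G)$ as appearing in~(\ref{smooth map}): one has to equip $C^{\infty}(N,G)$ with its Fr\'echet topology (equivalently, use convenient calculus on locally convex spaces) and confirm that smoothness of the curve, interpreted via Fr\'echet-space difference quotients, matches smoothness of the pointwise values together with all their parameter-derivatives. Once this convention is in place, the rest of the argument is the elementary chart-level verification above. I would close by stating that $\Phi$ and $\Psi$ are inverse isomorphisms of the resulting smooth-function spaces, with no further compatibility to check because the identification of fibers used to define them is functorial in $r$.
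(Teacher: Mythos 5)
Your argument is correct and follows essentially the same route as the paper: localize to trivializing charts of $G$ over $N$, reduce to the exponential law $C^{\infty}(I\times V,\mathbb{R}^k)\simeq C^{\infty}(I,C^{\infty}(V,\mathbb{R}^k))$, and patch using smoothness of the transition functions. The only difference is that the paper simply cites Kriegl--Michor for the exponential law, whereas you sketch a direct Fr\'echet-space proof of it (and rightly note that the meaning of ``smooth'' for curves into $C^{\infty}(N,G)$ must be fixed for the statement to make sense).
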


\begin{proof}
We have the diagram
\begin{equation*}
\begin{tikzcd}
\pi_N^*G
\arrow[rightarrow]{r} \arrow[rightarrow]{d} 
&G \arrow[rightarrow]{d}\\
 I\times N
\arrow[rightarrow]{r}{\pi_N}
  & N
\end{tikzcd}
\end{equation*}
Let $(U_i,\tau_{ij})$ be a covering of $N$ by open sets $U_i$ such that the bundle $G$ restricted to $U_i$ is trivial $G|_{U_i}\simeq U_i\times\mathbb{R}^k$. Maps $\tau_{ij}$ are corresponding transition maps, i.e. smooth maps $\tau_{ij}:U_i\cap U_j\to GL(k)$, where $k$ is the rank of the bundle $G$. Then $(I\times U_i,\tau_{ij}\circ\pi_N)$ is a covering for the pull-back bundle $\pi_N^*G$.

Let $s\in C^{\infty}(I\times N,\pi^*_NG)$. Restrict the section $s$ on a chart 
$$
s|_{I\times U_i}:I\times U_i\to I\times U_i\times\mathbb{R}^k.
$$
Since $C^{\infty}(I\times U_i,I\times U_i\times\mathbb{R}^k)\simeq C^{\infty}(I\times U_i,\mathbb{R}^k)$, we may reason in terms of maps. By the exponential law for smooth maps \cite[Theorem 3.12, Corollary 3.13]{KM}, we have
$$
C^{\infty}(I\times U_i,\mathbb{R}^k)\simeq C^{\infty}(I,C^{\infty}(U_i,\mathbb{R}^k)).
$$
Above $C^{\infty}(\cdot,\cdot)$ denotes a space of the smooth maps as in (\ref{smooth map}).
Now we pass back to the space of the smooth sections
$$
C^{\infty}(I,C^{\infty}(U_i,\mathbb{R}^k))\simeq C^{\infty}(I,C^{\infty}(U_i,U_i\times\mathbb{R}^k)),
$$
and obtain the isomorphism of spaces
$$
C^{\infty}(I\times U_i,I\times U_i\times\mathbb{R}^k)
\simeq C^{\infty}(I,C^{\infty}(U_i,U_i\times\mathbb{R}^k)).
$$
This isomorphism holds for any chart. Since the transition maps are smooth, we obtain the desired isomorphism.
\end{proof}

Let $(N,g_N)$ be a closed smooth Riemannian manifold. Denote space of the differential $k$-forms on $(N,g_N)$ by $\lambda^k(N)$.

\begin{lemm}
The following spaces are isomorphic
$$
\lambda^k(I\times N)\simeq C^{\infty}(I,\lambda^k(N)\oplus\lambda^{k-1}(N)),
$$
in particular
$$
C^\infty(I\times N)\simeq C^\infty(I,C^\infty(N)).
$$
\end{lemm}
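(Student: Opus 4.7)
The plan is to reduce the statement to the previous Lemma~\ref{lemma isomorphic spaces} by identifying $\Lambda^k T^*(I\times N)$ with the pullback via $\pi_N$ of $\Lambda^kT^*N\oplus\Lambda^{k-1}T^*N$. The key geometric input is that $I\times N$ is a product and that $T^*I$ is a trivial line bundle, globally trivialized by the coordinate one-form $dr$.

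First, from the product structure and the projection $\pi_I:I\times N\to I$ one obtains the splitting
\[
T^*(I\times N)=\pi_I^*T^*I\oplus\pi_N^*T^*N.
\]
Taking the $k$-th exterior power and using that $\pi_I^*T^*I$ has rank one, so that $\Lambda^i(\pi_I^*T^*I)=0$ for $i\geq2$, only the terms with $i=0$ and $i=1$ survive and give
\[
\Lambda^kT^*(I\times N)=\pi_N^*\Lambda^kT^*N\oplus\bigl(\pi_I^*T^*I\otimes\pi_N^*\Lambda^{k-1}T^*N\bigr).
\]
Since $dr$ globally trivializes $T^*I$, the factor $\pi_I^*T^*I$ is canonically trivial on $I\times N$, and the decomposition becomes
\[
\Lambda^kT^*(I\times N)\simeq\pi_N^*\bigl(\Lambda^kT^*N\oplus\Lambda^{k-1}T^*N\bigr).
\]

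Then I would apply Lemma~\ref{lemma isomorphic spaces} with the bundle $G:=\Lambda^kT^*N\oplus\Lambda^{k-1}T^*N$ over $N$, obtaining
\[
\lambda^k(I\times N)=C^\infty\bigl(I\times N,\Lambda^kT^*(I\times N)\bigr)\simeq C^\infty(I,C^\infty(N,G))=C^\infty\bigl(I,\lambda^k(N)\oplus\lambda^{k-1}(N)\bigr).
\]
Explicitly the isomorphism sends a $k$-form $\omega$ on $I\times N$ to the pair $(\alpha,\beta)$ in the unique decomposition $\omega=\alpha+dr\wedge\beta$, where $\alpha(r)\in\lambda^k(N)$ and $\beta(r)\in\lambda^{k-1}(N)$ depend smoothly on $r\in I$. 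The special case $C^\infty(I\times N)\simeq C^\infty(I,C^\infty(N))$ is obtained by setting $k=0$, since $\lambda^{-1}(N)=0$ and $\lambda^0(N)=C^\infty(N)$.

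The only mildly technical point, and likely the main obstacle in a fully rigorous write-up, is to verify that the bundle isomorphism above is not just a fibrewise isomorphism but respects smooth structures in the correct way so that Lemma~\ref{lemma isomorphic spaces} applies. This amounts to checking in local product coordinates $(r,x^1,\dots,x^{m-1})$ on $I\times U_i$ that "the component with $dr$" and "the component without $dr$" of a smooth form are individually smooth families of forms on $U_i$, which is immediate from the coordinate expansion of $\omega$. Everything else is purely formal from the splitting of the cotangent bundle and the preceding lemma.
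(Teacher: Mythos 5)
Your proposal is correct and follows essentially the same route as the paper: both identify $\Lambda^kT^*(I\times N)$ with the pullback $\pi_N^*\bigl(\Lambda^kT^*N\oplus\Lambda^{k-1}T^*N\bigr)$ via the exterior-power decomposition of $T^*_{(r,p)}(I\times N)=T_r^*I\oplus T_p^*N$ (the paper performs this fibrewise, implicitly using the trivialization of $T^*I$ that you make explicit with $dr$), and then both conclude by applying Lemma~\ref{lemma isomorphic spaces} to the bundle $G=\Lambda^kT^*N\oplus\Lambda^{k-1}T^*N$. Your added remarks on the explicit decomposition $\omega=\alpha+dr\wedge\beta$ and the smoothness check in local coordinates are a harmless elaboration of the same argument.
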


\begin{proof}
Define
$$
G:=\Lambda^kT^*N\oplus\Lambda^{k-1}T^*N.
$$
Then $\pi^*_NG=\Lambda^kT^*N\oplus\Lambda^{k-1}T^*N$, i.e. at a point $(r,p)\in I\times N$ the fiber of $\pi^*_NG$ is $\Lambda^kT_p^*N\oplus\Lambda^{k-1}T_p^*N$ for every $r\in I$. Note also that
\begin{equation*}
\begin{split}
\Lambda^k(T_{(r,p)}^*(I\times N))
&=\Lambda^k(T_r^*I\oplus T_p^*N)\\
&=(\Lambda^0T_r^*I\otimes\Lambda^kT_p^*N)
\oplus(\Lambda^1T_r^*I\otimes\Lambda^{k-1}T_p^*N)\\
&=\Lambda^kT_p^*N\oplus\Lambda^{k-1}T_p^*N.
\end{split}
\end{equation*}
Hence $\Lambda^kT^*(I\times N)=\pi^*_NG$.

By Lemma \ref{lemma isomorphic spaces}, we obtain
$$
C^{\infty}(I\times N, \Lambda^kT^*(I\times N))
\simeq
C^{\infty}(I,C^{\infty}(N, \Lambda^kT^*N\oplus\Lambda^{k-1}T^*N)).
$$

\end{proof}

Define by $L^2(I\times N)$ the Hilbert space of the square-integrable functions on $I\times N$ with the inner product
$$
\left<\varphi,\psi\right>_{L^2(I\times N)}=\int_I\int_N\varphi\psi r^{\dim N} \dvol_N dr,
$$
where $\varphi,\psi\in L^2(I\times N)$. Define by $L^2(I,L^2(N)):=\{\varphi:I\to L^2(N)\mid\varphi \text{ is square integrable }\}$ the Hilbert space with the inner product
$$
\left<\varphi,\psi\right>_{L^2(I,L^2(N))}=\int_I\int_N\varphi\psi \dvol_N dr,
$$
where $\varphi,\psi\in L^2(I,L^2(N))$.
Then there is a bijective unitary map
\begin{equation}\label{bijective map}
\Psi:L^2(I,L^2(N)) \to L^2(I\times N).
\end{equation}
For $\varphi\in L^2(I,L^2(N))$ the map is defined by
$$
\varphi \mapsto r^{-\frac{\dim N}{2}}\varphi.
$$
\end{subsection}

\begin{subsection}{Trace lemma}

 The Trace Lemma will be used for the proofs in Section~\ref{computations}. The proof of Lemma can be found in \cite[Appendix A]{BS2}.

Let $H$ be a Hilbert space. Denote by $C_1(H)$ the trace class operators, i.e. the first Schatten class of operators. Denote by $||\cdot||_{\tr}$ and $||\cdot||_{HS}$ respectively trace norm and Hilbert-Schmidt operator norm.

\begin{lemm}[Trace Lemma]\label{trace_lemma}
Let $T$ be a trace class operator on $L^2(\mathbb{R},H)$. Then $T$ has a kernel $t(x,y)$, so that for $u(x)\in dom(T)$ we have $Tu(x)=\int_{-\infty}^{\infty}t(x,y)u(y)dy$ and
$$
h\mapsto t(\cdot,\cdot+h)
$$
is a continuous map from $\mathbb{R}$ into $L^1(\mathbb{R},C_1(H))$. Furthermore,
$$
\int_{-\infty}^{\infty}||t(x,x)||_{\tr}dx\leq||T||_{\tr}
$$
and
$$
\int_{-\infty}^{\infty}\tr(t(x,x))dx=\tr T.
$$
\end{lemm}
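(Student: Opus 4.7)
The plan is to reduce the assertion to the case of a rank-one operator via the Schmidt (singular value) decomposition of $T$, and then lift the rank-one statements to the general case by an absolutely convergent series argument in $L^1(\mathbb{R},C_1(H))$.

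First I would write $T=\sum_n s_n \left<\cdot,g_n\right>f_n$, with $\{f_n\},\{g_n\}$ orthonormal in $L^2(\mathbb{R},H)$ and $s_n\geq 0$ satisfying $\sum_n s_n=\|T\|_{\tr}$. For each rank-one summand $T_n u=\left<u,g_n\right>f_n$, the kernel is the $C_1(H)$-valued function $t_n(x,y):h\mapsto f_n(x)\left<h,g_n(y)\right>_H$. A direct computation gives
$$
\|t_n(x,y)\|_{\tr}=\|f_n(x)\|_H\|g_n(y)\|_H,\qquad \tr t_n(x,x)=\left<f_n(x),g_n(x)\right>_H.
$$
Cauchy--Schwarz in $L^2(\mathbb{R})$ then yields
$$
\int\|t_n(x,x)\|_{\tr}dx\leq \|f_n\|_{L^2(\mathbb{R},H)}\|g_n\|_{L^2(\mathbb{R},H)}=1=\|T_n\|_{\tr},
$$
and, by definition of the trace of a rank-one operator,
$$
\int \tr t_n(x,x)\,dx=\left<f_n,g_n\right>_{L^2(\mathbb{R},H)}=\tr T_n.
$$

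Next I would establish the continuity assertion at the rank-one level. Strong $L^2$-continuity of translations in $L^2(\mathbb{R},H)$ gives, again by Cauchy--Schwarz,
$$
\int\|t_n(x,x+h)-t_n(x,x)\|_{\tr}dx\leq\|f_n\|_{L^2}\,\|g_n(\cdot+h)-g_n\|_{L^2}\xrightarrow[h\to 0]{}0,
$$
so $h\mapsto t_n(\cdot,\cdot+h)$ is continuous into $L^1(\mathbb{R},C_1(H))$. I then set $t(x,y):=\sum_n s_n t_n(x,y)$ and observe, using the uniform bound
$$
\sum_n s_n\int\|t_n(\cdot,\cdot+h)\|_{C_1(H)}\leq \sum_n s_n=\|T\|_{\tr},
$$
that the series converges absolutely in $L^1(\mathbb{R},C_1(H))$ uniformly in $h$. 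This delivers both the kernel representation $(Tu)(x)=\int t(x,y)u(y)\,dy$ and the continuity of $h\mapsto t(\cdot,\cdot+h)$. The two displayed estimates in the statement then follow by summing the rank-one inequalities, invoking Tonelli for the first and absolute convergence of $\sum_n s_n$ for the second.

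The main obstacle I expect is the technical fact that $f_n,g_n$ are only defined a.e., so the pointwise diagonal $t(x,x)$ is not obviously meaningful. I would address this by fixing jointly measurable representatives via Fubini: the change of variable $y=x+h$ together with $L^1$-convergence of the series in the variables $(x,h)$ makes $(x,h)\mapsto t(x,x+h)$ well-defined almost everywhere, and the $L^1$-valued continuity in $h$ then pins down a canonical representative of the diagonal restriction $x\mapsto t(x,x)$ up to a null set, which is all that is needed for the integral identities.
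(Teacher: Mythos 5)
Your argument is correct. Note that the paper itself does not prove this lemma; it only cites \cite[Appendix A]{BS2}, where the result is obtained by factoring the trace class operator $T$ into a product of two Hilbert--Schmidt operators and composing their $L^2$ kernels. Your route via the Schmidt decomposition $T=\sum_n s_n\left<\cdot,g_n\right>f_n$ is the same argument in only slightly different clothing (the factorization is just $A=\sum_n\sqrt{s_n}\left<\cdot,e_n\right>f_n$, $B=\sum_n\sqrt{s_n}\left<\cdot,g_n\right>e_n$), and all the key steps check out: the rank-one identities $\lVert t_n(x,y)\rVert_{\tr}=\lVert f_n(x)\rVert_H\lVert g_n(y)\rVert_H$ and $\tr t_n(x,x)=\left<f_n(x),g_n(x)\right>_H$, the Cauchy--Schwarz estimates, the translation-continuity in $L^2$, and the uniform-in-$h$ absolute convergence of the series in $L^1(\mathbb{R},C_1(H))$ that legitimizes summation and the interchange of sum and integral. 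You also correctly identify and resolve the one genuine subtlety, namely that the diagonal of an a.e.-defined kernel is not a priori meaningful and must be pinned down as the $h=0$ value of the continuous map $h\mapsto t(\cdot,\cdot+h)$; this is exactly why the continuity statement is part of the lemma.
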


\end{subsection}

\begin{subsection}{Regularized integrals and the Singular Asymptotics Lemma}\label{regularized integrals}

In this section we define the regularized integral over the interval $(0,\infty)$ for a certain class of locally integrable functions using the Mellin transform. We follow \cite[Section~2.1]{L}. First, we recall the definition of the Mellin transform and specify the class of functions with which we will work.

\begin{defi}
Let $H$ be a Hilbert space. For a function $f\in C^{\infty}_c((0,\infty),H)$, the {\itshape Mellin transform} is defined by
$$
Mf(s):=\int_0^{\infty}x^{s-1}f(x)dx,
$$
for $s\in\mathbb{C}$.
\end{defi}

Let $p,q>0$ and denote $L_{loc}^1(0,\infty):=L_{loc}^1((0,\infty))$.
\begin{defi}\label{p,q-functions}
Let $f\in L_{loc}^1(0,\infty)$ be a locally integrable function such that
\begin{align*}
f(x)
&=\sum_{j=1}^N\sum_{k=0}^{m_j}a_{jk}x^{\alpha_j}\log^kx+x^pf_1(x)\\
&=\sum_{j=1}^M\sum_{k=0}^{m'_j}b_{jk}x^{\beta_j}\log^kx+x^{-q}f_2(x),
\end{align*}
where $f_1\in L_{loc}^1([0,\infty))$, $f_2\in L^1([1,\infty))$ and $\alpha_j,\beta_j\in\mathbb{C}$ with real parts $\re(\alpha_j)\leq p-1$ increasing and $\re(\beta_j)\geq-q-1$ decreasing as $j$ grows.
Denote the class of such functions by $L_{p,q}(0,\infty)\subset L^1_{loc}(0,\infty)$.
\end{defi}

Also denote
\begin{align*}
&L_{\infty,q}(0,\infty):=\cap_{p>0}L_{p,q}(0,\infty),\\
&L_{p,\infty}(0,\infty):=\cap_{q>0}L_{p,q}(0,\infty),\\
&L_{as}(0,\infty):=L_{\infty,\infty}(0,\infty):=\cap_{p>0}L_{p,\infty}(0,\infty).
\end{align*}

\begin{rema}
In Definition \ref{p,q-functions} the first equality reflects the behaviour of $f(x)$ as $x\to0$ and the second equality reflects the behaviour of $f(x)$ as $x\to\infty$.
\end{rema}

\begin{rema}
For $f\in L_{p,q}(0,\infty)$ and $\re(s)>-\min_{1\leq j\leq N}\{\re(\alpha_j)\}$, the function $x^{s-1}f(x)$ is locally integrable with respect to $x\in[0,\infty)$. 
\end{rema}

We extend the Mellin transform to $f\in L_{p,q}(0,\infty)$ by splitting it into two integrals. For $c>0$, denote
$$
(Mf)(s):=(M_{[0,c]}f)(s)+(M_{[c,\infty]}f)(s):=\int_0^cx^{s-1}f(x)dx+\int_c^{\infty}x^{s-1}f(x)dx.
$$
The next proposition shows that the Mellin transform is well defined, for the proof see \cite[Section~2.1]{L}.

\begin{prop}
Let $p,q>0$, $f\in L_{pq}(0,\infty)$ and $s\in\mathbb{C}$, such that $1-p<\re(s)<1+q$. Then 
$$
(Mf)(s)=(M_{[0,c]}f)(s)+(M_{[c,\infty]}f)(s)
$$
is a meromorphic function in a strip $1-p<\re(s)<1+q$ and is independent of $c$. Moreover, the continuation of $(Mf)(s)$ may have poles at most of order $m_j+1$ at $s=-\alpha_j$ and $m'_j+1$ at $s=-\beta_j$ in the notations of Definition \ref{p,q-functions}.
\end{prop}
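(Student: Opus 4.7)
The plan is to use the two asymptotic expansions of $f$ from Definition~\ref{p,q-functions}: the first to meromorphically continue $(M_{[0,c]}f)(s)$ on a right half-plane, the second to continue $(M_{[c,\infty]}f)(s)$ on a left half-plane, and then to combine them and verify independence of the cut-off $c$.

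For the first piece I would substitute the near-zero expansion into $M_{[0,c]}f$, obtaining
$$
(M_{[0,c]}f)(s)=\sum_{j=1}^{N}\sum_{k=0}^{m_j}a_{jk}\int_0^c x^{s+\alpha_j-1}\log^k x\,dx+\int_0^c x^{s+p-1}f_1(x)\,dx.
$$
The model integrals can be evaluated explicitly by writing $x^\alpha\log^k x=\partial_\alpha^k x^\alpha$ and using $\int_0^c x^{s+\alpha-1}\,dx=c^{s+\alpha}/(s+\alpha)$ (valid for $\re(s+\alpha)>0$), which yields $\partial_\alpha^k\bigl(c^{s+\alpha}/(s+\alpha)\bigr)\big|_{\alpha=\alpha_j}$; setting $u=s+\alpha$ and using Leibniz, this is meromorphic in $s\in\mathbb{C}$ with a single pole at $s=-\alpha_j$ of order at most $k+1$, so the full sum over $k\le m_j$ has a pole of order at most $m_j+1$ at each $s=-\alpha_j$. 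The remainder is controlled by the local integrability of $f_1$ on $[0,c]$ together with the uniform bound $|x^{s+p-1}|\le c^{\re(s)+p-1}$ for $x\in(0,c]$ and $\re(s)>1-p$, which shows, via a standard parameter-integral argument (Morera plus Fubini, or differentiation under the integral sign), that $s\mapsto\int_0^c x^{s+p-1}f_1(x)\,dx$ is holomorphic on $\{\re(s)>1-p\}$.

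An entirely symmetric computation handles $(M_{[c,\infty]}f)(s)$ using the second expansion: the model integrals produce $\partial_\beta^k\bigl(-c^{s+\beta}/(s+\beta)\bigr)\big|_{\beta=\beta_j}$, yielding meromorphic continuations with poles of order at most $m'_j+1$ at $s=-\beta_j$, and the remainder $\int_c^\infty x^{s-q-1}f_2(x)\,dx$ is holomorphic on $\{\re(s)<1+q\}$ by $f_2\in L^1([1,\infty))$ (taking $c\ge 1$, which we may, since only the sum matters). Adding the two pieces produces the meromorphic continuation on the strip $1-p<\re(s)<1+q$ with exactly the claimed pole structure. Independence of $c$ follows from the observation that for $0<c_1<c_2$ the difference $(M_{[0,c_2]}f)(s)-(M_{[0,c_1]}f)(s)=\int_{c_1}^{c_2}x^{s-1}f(x)\,dx$ is entire in $s$ and exactly cancels the opposite change in $(M_{[c,\infty]}f)(s)$. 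The only real technical point is the pole-order count for $\partial_\alpha^k\bigl(c^{s+\alpha}/(s+\alpha)\bigr)$ at $s=-\alpha_j$, which follows from a Leibniz expansion together with the fact that $c^{s+\alpha}$ is entire in both variables; everything else is bookkeeping for absolutely convergent parameter integrals.
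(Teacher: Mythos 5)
Your proof is correct. The paper itself does not supply a proof of this proposition but defers to \cite[Section~2.1]{L}, and your argument --- substituting the near-zero and near-infinity expansions into the two halves of the integral, evaluating the model integrals as $\partial_\alpha^k\bigl(c^{s+\alpha}/(s+\alpha)\bigr)$ to read off poles of order at most $m_j+1$ (resp.\ $m_j'+1$), checking holomorphy of the remainder integrals on the half-planes $\re(s)>1-p$ and $\re(s)<1+q$ via dominated convergence, and obtaining $c$-independence from the cancellation of the entire function $\int_{c_1}^{c_2}x^{s-1}f(x)\,dx$ --- is precisely the standard proof given there, with all the key points (including the reduction to $c\ge1$ for the $f_2$-term) handled correctly.
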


Let $f$ be a meromorphic function. Denote by $\Res_kf(z_0)$ the coefficient of $(z-z_0)^{-k}$ in the Laurent expansion of $f$ near $z_0$
$$
f(z)=\sum_{k=-m}^{\infty}\Res_{-k}f(z_0)(z-z_0)^k.
$$

\begin{defi}\label{definition of regularized integral}
Let $f\in L_{p,q}(0,\infty)$. A {\itshape regularized integral} is the constant coefficient in the Laurent expansion near $s=1$ of the Mellin transform of $f$, i.e.
$$
\fint_0^{\infty}f(x)dx:=\Res_0(Mf)(1).
$$
\end{defi}

%%%%%%% SAL %%%%%%%

Now we are ready to state the Singular Asymptotics Lemma.

Let $C:=\{|\arg\zeta|<\pi-\epsilon\}$ for some $\epsilon>0$.

\clearpage
\begin{lemm}[Singular Asymptotics Lemma, {\cite[p.372]{BS2}}]\label{SAL}
Let $\sigma(r,\zeta)$ be defined on $\mathbb{R}\times C$ and satisfy the following conditions
\begin{itemize}
\item [(1)] $\sigma(r,\zeta)$ is $C^{\infty}$ with respect to $r$ and has analytic derivatives with respect to $\zeta$;
\item [(2)] there exist Schwartz functions $\sigma_{\alpha j}(r)\in \mathscr{S}(\mathbb{R})$ such that for $|\zeta|\geq1$ and $0\leq r\leq|\zeta|/C_0$,
$$
\left|r^J\partial_r^K\left(\sigma(r,\zeta)-\sum_{Re\alpha\geq-M}\sum_{j=0}^{J_\alpha}\sigma_{\alpha j}(r)\zeta^{\alpha}\log^j\zeta\right)\right|
\leq C_{JKM}|\zeta|^{-M};
$$
\item[(3)](integrability condition) the derivatives $\sigma^{(j)}(r,\zeta):=\partial^j_r\sigma(r,\zeta)$ satisfy uniformly for $0\leq t\leq1$ and $|\xi|=C_0$
$$
\int_0^1\int_0^1s^j|\sigma^{(j)}(st,s\xi)|dsdt\leq C_j.
$$
\end{itemize}
Then
\begin{align*}
\int_0^{\infty}\sigma(r,rz)dr\sim_{z\to\infty}
&\sum_{k=0}^{\infty}z^{-k-1}\fint_0^{\infty}\frac{\zeta^k}{k!}\sigma^{(k)}(0,\zeta)d\zeta\\
+&\sum_{\alpha}\sum_{j=0}^{J_{\alpha}}\fint_0^{\infty}\sigma_{\alpha j}(r)(rz)^{\alpha}\log^j(rz)dr\\
+&\sum_{\alpha=-1}^{-\infty}\sum_{j=0}^{J_{\alpha}}\sigma^{(-\alpha-1)}_{\alpha j}(0)\frac{z^{\alpha}\log^{j+1}z}{(j+1)(-\alpha-1)!}.
\end{align*}

\end{lemm}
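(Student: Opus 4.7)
The plan is to derive the three-part expansion by splitting the $r$-integral at a scale comparable to $1/|z|$, expanding $\sigma$ by a different method on each piece, and matching the two regularized tails. Fix a constant $c>C_0$ and write $\int_0^{\infty}\sigma(r,rz)\,dr = I_1 + I_2$ with $I_1 = \int_0^{c/|z|}$ and $I_2=\int_{c/|z|}^{\infty}$. On $I_1$ the variable $\zeta := rz$ stays bounded, so the job is to expand in the first slot of $\sigma$; on $I_2$ we have $|rz|\geq c$, which is where condition~(2) is available.

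For $I_1$, change variables $\zeta = rz$ to get $z^{-1}\int_0^c\sigma(\zeta/z,\zeta)\,d\zeta$, and Taylor expand $\sigma(\zeta/z,\zeta)$ in the first argument around $0$:
\[
\sigma(\zeta/z,\zeta)=\sum_{k=0}^{K-1}\frac{(\zeta/z)^k}{k!}\sigma^{(k)}(0,\zeta)+R_K(\zeta/z,\zeta).
\]
The integrability condition~(3) is tailored precisely to bound the remainder, forcing its contribution to be $O(|z|^{-K-1})$. Extending the $\zeta$-integration to $\infty$ via the regularized integral in Definition~\ref{definition of regularized integral} produces the terms $z^{-k-1}\fint_0^\infty \zeta^k\sigma^{(k)}(0,\zeta)\,d\zeta/k!$, which is the first sum in the conclusion, together with a leftover piece $\int_c^\infty$ that must cancel against a counterpart arising from $I_2$.

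For $I_2$, insert the asymptotic expansion from condition~(2) term by term:
\[
\sigma(r,rz)=\sum_{\re\alpha\geq-M}\sum_{j=0}^{J_\alpha}\sigma_{\alpha j}(r)(rz)^\alpha\log^j(rz) + R_M,
\]
with $|R_M|\leq C|rz|^{-M}$. Each summand is integrated over $[c/|z|,\infty)$; using the Schwartz decay of $\sigma_{\alpha j}$ at infinity and extending the lower limit back to $0$ via the regularized integral, we recognize the second sum $\sum_{\alpha,j}\fint_0^\infty\sigma_{\alpha j}(r)(rz)^\alpha\log^j(rz)\,dr$. The third sum is the anomaly from this regularization: the Mellin transform of $\sigma_{\alpha j}(r)r^\alpha$ has poles at $s = -\alpha$, so when $\alpha$ is a negative integer $-\alpha-1\in\mathbb{Z}_{\geq 0}$, a simple pole interacts with $(rz)^\alpha$ to produce a $\log^{j+1}z$ term whose coefficient is determined by a residue computation, giving $\sigma_{\alpha j}^{(-\alpha-1)}(0)/((j+1)(-\alpha-1)!)$.

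The main obstacle, I expect, is the bookkeeping that shows the spurious cutoff-dependent pieces from $I_1$ (the "$\int_c^\infty$" tails of the Taylor terms) cancel exactly against the analogous cutoff-dependent pieces from $I_2$ (the "$\int_0^{c/|z|}$" pieces of the asymptotic terms), leaving only the three sums and no residual dependence on the matching scale $c$. This cancellation is clean only after passing to the Mellin-regularized integrals, so one must simultaneously expand the Laurent series of the relevant Mellin transforms and verify that the regular parts match while the pole parts produce exactly the anomalous third sum. Condition~(3) secures the uniform bounds in the transitional regime $r\sim 1/|z|$ that make these term-by-term manipulations rigorous.
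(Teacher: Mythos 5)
The paper does not prove this lemma: it is imported verbatim from Br\"uning--Seeley \cite[p.\,372]{BS2} and used as a black box, so there is no internal proof to compare your attempt against. Measured against the original source, your outline does reproduce the architecture of the Br\"uning--Seeley argument: split the integral at $r\sim c/|z|$, Taylor-expand $\sigma$ in its first argument on the inner piece, insert the $\zeta\to\infty$ expansion of condition~(2) on the outer piece, pass both tails through the Mellin-regularized integral, and read off the $\log^{j+1}z$ terms as the residue contributions that appear exactly when $\alpha$ is a negative integer. That is the right picture, and your identification of condition~(3) as the estimate governing the transitional regime $r\sim 1/|z|$ is also correct.

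As a proof, however, the proposal stops exactly where the work begins. The statement that the cutoff-dependent tails from $I_1$ cancel against those from $I_2$ ``after passing to the Mellin-regularized integrals'' is not a bookkeeping afterthought; it is the entire content of the lemma, and it is here that the double expansion of $\sigma$ (in $r$ near $0$ and in $\zeta$ near $\infty$) must be shown to be compatible, with the diagonal terms $\sigma_{\alpha j}^{(k)}(0)\,r^{k}(rz)^{\alpha}\log^{j}(rz)$ for $k=-\alpha-1$ producing the anomalous third sum rather than cancelling. Two further points are asserted without justification: that condition~(3), which is stated as a bound on $\int_0^1\int_0^1 s^{j}|\sigma^{(j)}(st,s\xi)|\,ds\,dt$ along the circle $|\xi|=C_0$, actually controls the integrated Taylor remainder to order $O(|z|^{-K-1})$ (this requires the substitution $r=st$, $\zeta=s\xi$ and an appeal to analyticity in $\zeta$ to move off the circle $|\xi|=C_0$); and that $\sigma^{(k)}(0,\cdot)$ lies in a class $L_{p,q}(0,\infty)$ for which $\fint_0^{\infty}\zeta^{k}\sigma^{(k)}(0,\zeta)\,d\zeta$ is even defined, which follows from differentiating the expansion in condition~(2) at $r=0$ but must be said. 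So the proposal is a faithful roadmap of the published proof, not a substitute for it; for the purposes of this paper the honest course is what the author does, namely to cite \cite{BS2}.
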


\begin{rema}
Above $\alpha$ is any sequence of complex numbers with $\re(\alpha)\to-\infty$. The last sum in the expansion includes only those $\alpha$ that happen to be negative integers. $J_\alpha$ is the biggest power of $\log\zeta$ that occurs for $\alpha$.
\end{rema}
\end{subsection}

\end{section}

\begin{section}{Local computations}\label{computations}

The aim of this chapter is to prove Theorem~\ref{main theorem}. First, we recall the asymptotic expansion of the heat kernel along the diagonal, which is a local result and does not require completeness of the manifold. The expansion is given in terms of the curvature tensor and its covariant derivatives. In the case of a compact manifold, one integrates the terms in the local expansion over the manifold and obtains the classical heat kernel expansion (\ref{smooth expansion}). In the case of a non-complete manifold $(M,g)$ with conic singularities, defined in Section~\ref{geometric setup}, we compute the curvature tensor near the conic point and observe that the integrals over the manifold in general diverge near the conic point. Then we use the Singular Asymptotics Lemma to obtain the heat trace expansion from the local heat kernel expansion.

\begin{subsection}{Local expansion of the heat kernel}\label{section local expansion}

Let $(M,g)$ be a Riemannian manifold, possibly non-complete, and $\Delta$ be the Laplace-Beltrami operator on $(M,g)$. For $(p,q)\in M\times M$ denote the heat kernel by $e^{-t\Delta}(p,q)$. The heat kernel along the diagonal $(p,p)\in M\times M$ is denoted by $e^{-t\Delta}(p)$. The next proposition gives an expansion of the heat kernel along the diagonal on any compact subset of $(M,g)$.

\begin{theo}[{\cite[Section III.E]{BGM}} ] \label{local_heat_expansion}
Let $K\subset M$ be any compact set and $p\in K$. There is an asymptotic expansion of the heat kernel along the diagonal
$$
\lVert e^{-t\Delta}(p)-(4\pi t)^{-\frac{\dim M}{2}}\sum_{i=0}^{j}t^iu_i(p)\rVert\leq C_j(K)t^{j+1},
$$
where $C_j(K)$ is some constant which depends on the compact set $K$. Moreover, $u_0(p)\equiv1$ and $u_1(p)=\frac16\Scal(p)$, where $\Scal(p)$ is the scalar curvature at $p\in M$, and all $u_i(p)$ are polynomials on the curvature tensor and its covariant derivatives.
\end{theo}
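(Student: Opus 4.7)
The plan is to construct a local heat kernel parametrix and control the error via Duhamel's principle. The statement is purely local—it constrains $e^{-t\Delta}(p)$ only for $p$ in a fixed compact $K\subset M$—so (non)completeness of $(M,g)$ plays no role, and only the local symbol of $\Delta$ near $K$ matters. Around any point of $K$ I would work in a geodesic normal chart, let $r(p,q)$ denote geodesic distance and $\Theta(p,q)$ the volume density of the exponential map, and follow the Minakshisundaram--Pleijel recipe by looking for an approximate heat kernel of the form
$$
H_N(t,p,q)=(4\pi t)^{-m/2}\,e^{-r(p,q)^2/(4t)}\sum_{i=0}^{N}t^i\,u_i(p,q),
$$
with $u_i$ smooth on the normal neighborhood.

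Applying $\partial_t+\Delta_q$ to this ansatz and separating powers of $t$ produces a hierarchy of transport equations along radial geodesics emanating from $p$. The leading equation, together with the normalization $u_0(p,p)=1$, is solved by $u_0(p,q)=\Theta(p,q)^{-1/2}$; the equation at level $i+1$ is a first-order inhomogeneous ODE in $r$ that determines $u_{i+1}$ uniquely from $\Delta_q u_i$, with regular initial data at $r=0$. By induction, $u_i(p,q)$ is polynomial in the Taylor coefficients of the metric in normal coordinates, and hence (via the standard expansion $g_{ij}=\delta_{ij}-\tfrac13 R_{ikjl}x^kx^l+O(|x|^3)$) polynomial in the curvature tensor and its covariant derivatives at $p$.

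Next, I would invoke Duhamel's principle with a smooth cutoff $\chi$ supported in the normal neighborhood and identically $1$ near the diagonal. By construction $(\partial_t+\Delta_q)H_N=(4\pi t)^{-m/2}e^{-r^2/(4t)}\,t^N\,\Delta_q u_N$ is uniformly $O(t^N)$ on $K\times K$, while the commutator term $[\Delta_q,\chi]H_N$ is exponentially small in $t$ thanks to the Gaussian factor. Taking $N$ sufficiently large depending on $j$ and $m$, integrating the Duhamel identity in $s\in[0,t]$ and applying Laplace-type estimates for Gaussians yields an error bounded by $t^{j+1}$ uniformly for $p,q\in K$. Restricting to the diagonal and setting $u_i(p):=u_i(p,p)$ produces the claimed expansion.

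Finally, $u_0(p)=\Theta(p,p)^{-1/2}=1$ is immediate, and $u_1(p)=\tfrac16\Scal(p)$ follows by evaluating the second transport equation at the diagonal and using the well-known normal-coordinate expansion $\Theta(p,q)=1-\tfrac16\Ric_{ij}(p)x^ix^j+O(|x|^3)$, so that the Laplacian of $\Theta^{-1/2}$ at the base point equals $\tfrac16\Scal(p)$ up to the combinatorial factor dictated by the transport equation. The main technical obstacle is the Duhamel estimate: one must verify that convolving the $O(s^N)$ residual against the parametrix genuinely gains the predicted power of $t$, uniformly over $K$, which requires careful Laplace-method bookkeeping to dominate both the $(4\pi s)^{-m/2}$ singularity near $s=0$ and the off-diagonal Gaussian factor.
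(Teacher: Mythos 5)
Your proposal is correct and follows the classical Minakshisundaram--Pleijel parametrix construction with Duhamel's principle, which is precisely the argument in the cited reference \cite[Section III.E]{BGM}; the paper itself states this theorem without proof, relying on that citation. No substantive differences to report.
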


Furthermore \cite[p.201 Theorem 3.3.1]{G2}
\begin{equation}\label{term u_2}
u_2(p)=\frac{1}{360}\left(12\Delta \Scal(p)+5\Scal(p)^2-2\lvert\Ric(p)\rvert^2+2\lvert\R(p)\rvert^2\right),
\end{equation}
where
$$
\lvert\R(p)\rvert^2:=\R_{ijkl}(p)\R_{ijkl}(p)g^{ii}(p)g^{jj}(p)g^{kk}(p)g^{ll}(p),
$$
$$
\lvert\Ric(p)\rvert^2:=\Ric_{ij}(p)\Ric_{ij}(p)g^{ii}(p)g^{jj}(p).
$$
Above, $\R_{ijkl}(p)$ is the Riemann curvature tensor, $\Ric_{ij}(p)$ is the Ricci tensor, $\Scal(p)$ is the scalar curvature.

The heat operator is closely related to the resolvent operator by the Cauchy's differentiation formula. For a positively oriented closed path $\gamma$ in the complex plane surrounding the spectrum of $\Delta$ and for $d\in\mathbb{N}$, we have

\begin{align}\label{Cauchy formula}
e^{-t\Delta}=-t^{1-d}\frac{(d-1)!}{2\pi i}\int_{\gamma}e^{-t\mu}(\Delta-\mu)^{-d}d\mu.
\end{align}

To interpolate between the expansion of the heat trace and the expansion of the resolvent trace we will use the following formulas

\begin{equation}\label{integral1}
\begin{split}
\int_{\gamma}e^{-t\mu}(-\mu)^{-n}d\mu
&=2\pi i\Res_{\mu=0}(e^{-t\mu}(-\mu)^{-n})\\
&=\frac{2\pi i}{(n-1)!}\lim_{\mu\to0}\left(\frac{d}{d\mu}\right)^{n-1}(-1)^ne^{-t\mu}
=-\frac{2\pi i}{\Gamma(n)}t^{n-1}
\end{split}
\end{equation}

and

\begin{equation}\label{integral2}
\begin{split}
\int_{\gamma}e^{-t\mu}(-\mu)^{-n}\log(-\mu)d\mu
&=-\frac{d}{dn}\int_{\gamma}e^{-t\mu}(-\mu)^{-n}d\mu
=\frac{d}{dn}\left(\frac{2\pi i}{\Gamma(n)}t^{n-1}\right)\\
&=2\pi i\frac{t^{n-1}\log t\Gamma(n)-t^{n-1}\Gamma(n)'}{\Gamma(n)^2}\\
&=\frac{2\pi i}{\Gamma(n)}t^{n-1}\log t-\frac{2\pi i}{\Gamma(n)}\frac{\Gamma'(n)}{\Gamma(n)}t^{n-1}.
\end{split}
\end{equation}

On $(M,g)$ by Theorem \ref{local_heat_expansion}, we have the local asymptotic expansion of the heat kernel along the diagonal. Then using Cauchy's differentiation formula (\ref{Cauchy formula}), (\ref{integral1}) and (\ref{integral2}), we obtain the expansion of the kernel of the resolvent along the diagonal for $p\in K\subset M$. Denote $z^2:=-\mu$. By \cite[p.61, Lemma 1.7.2]{G},

\begin{equation*}
\begin{split}
&\left\Vert(\Delta+z^2)^{-d}(p)-
(4\pi)^{-\frac{m}{2}}\sum_{j=d-\frac{m}{2}}^{k}z^{-2j}u_{j+\frac{m}{2}-d}(p)\frac{\Gamma(j)}{(d-1)!}\right\Vert
\leq \tilde{C}_k(K)z^{-2j-2},
\end{split}
\end{equation*}
for some $\tilde{C}(K)>0$. For convenience denote $l:=j+\frac{m}{2}-d$, then

\begin{equation}\label{local_resolvent_expansion}
\begin{split}
&\left\Vert(\Delta+z^2)^{-d}(p)-
(4\pi)^{-\frac{m}{2}}\sum_{l=0}^{k+m/2-d}z^{-2d+m-2l}u_l(p)\frac{\Gamma(-\frac{m}{2}+d+l)}{(d-1)!}\right\Vert\\
&\leq \tilde{C}_k(K)z^{-2d+m-2l-2},
\end{split}
\end{equation}
where $u_0(p)\equiv1$ and $u_1(p)=\frac{\Scal(p)}{6}$.

\end{subsection}

\begin{subsection}{Curvature tensor in polar coordinates}
In this section we give explicit formulas for the curvature tensors in the neighbourhood $(U,g_{\text{conic}})$ of the conic singularity in terms of the curvature tensors on the cross-section manifold $(N,g_N)$ of dimension $n$. Let $x=(x^1,\dots,x^n)$ be local coordinates on $(N,g_N)$ and $p=(r,x^1,\dots,x^n)\in U$. For $i,j\in\{0,1,\dots,n\}$ denote by $\tilde{g}_{ij}$ the components of the metric tensor $g_{\text{conic}}=dr^2+r^2g_N$, and by $g_{ij}$ for $i,j\in\{1,\dots,n\}$ the components of the metric tensor $g_N$. Then
$$
\tilde{g}_{00}=1,  \;\;\;\;  \tilde{g}_{i0}=\tilde{g}_{0i}=0 \text{, for } i>0,
$$
and
\begin{align*}
\tilde{g}_{ij}=r^2g_{ij}, \text{ for } i,j>0.
\end{align*}

We use the standard notations for the tensors that correspond to the metric $g_{ij}$. For tensors corresponding to the metric $\tilde{g}_{ij}$, we use the same notations, but with tildes. To stress that the tensor depends on a point we use $\tilde{g}_{ij}(p)=\tilde{g}_{ij}(r,x)$. If it is clear, we may omit a point $p$ to simplify the notations. Denote the derivative of $\tilde{g}_{ij}$ with respect to the $k$-th coordinate by $\tilde{g}_{ij,k}$. If $i$ or $j$ or both are equal to zero then $\tilde{g}_{ij,k}=0$ for any $k\in{0,1,\dots,n}$. Suppose $i,j\neq0$, then

\begin{align*}
\tilde{g}_{ij,k}(r,x)=
\begin{cases}
2rg_{ij}(x), \text{ if } k=0,\\
r^2g_{ij,k}(x), \text{ if } k\neq0.
\end{cases}
\end{align*}

The Christoffel symbols are of course
$$
\tilde{\Gamma}^i_{jk}=\frac12 \tilde{g}^{im}\left(\tilde{g}_{mj,k}+\tilde{g}_{mk,j}-\tilde{g}_{jk,m}\right),
$$
but now we express them in terms of the Christoffel symbols $\Gamma^i_{jk}$ and the metric tensor $g_{ij}$.

Let $i=0$
\begin{align*}
\tilde{\Gamma}^0_{jk}=
\begin{cases}
0, \text{ if } j=0 \text{ or } k=0,\\
-rg_{jk}, \text{ otherwise }.
\end{cases}
\end{align*}

Assume $i\neq0$ and let $j=0$, then
\begin{align*}
\tilde{\Gamma}^i_{0k}=\tilde{\Gamma}^i_{k0}
&=\frac12 \tilde{g}^{im}\left(\tilde{g}_{m0,k}+\tilde{g}_{mk,0}-\tilde{g}_{0k,m}\right)\\
&=\frac12 r^{-2}g^{im}\left(\tilde{g}_{mk,0}\right)\\
&=\frac12 r^{-2}g^{im}\left(2rg_{mk}\right)\\
&=r^{-1}\delta^i_k.
\end{align*}
If both $j=k=0$, then $\tilde{\Gamma}^i_{00}=0$.
Assume that $i,j$ and $k$ are all non-zero. Then

$$
\tilde{\Gamma}^i_{jk}=\Gamma^i_{jk}.
$$

%%%%%%%%% scalar curvature %%%%%%%%

The scalar curvature $\tilde{\Scal}$ can be expressed in terms of the Christoffel symbols $\tilde{\Gamma}^i_{jk}$ in the following way

\begin{align*}
\tilde{\Scal}=&\tilde{g}^{ij}\left(\tilde{\Gamma}^m_{ij,m}-\tilde{\Gamma}^m_{im,j}
+\tilde{\Gamma}^l_{ij}\tilde{\Gamma}^m_{ml}-\tilde{\Gamma}^l_{im}\tilde{\Gamma}^m_{jl}\right)\\
=&\tilde{g}^{0j}\left(\tilde{\Gamma}^m_{0j,m}-\tilde{\Gamma}^m_{0m,j}
+\tilde{\Gamma}^l_{0j}\tilde{\Gamma}^m_{ml}-\tilde{\Gamma}^l_{0m}\tilde{\Gamma}^m_{jl}\right)\\
&+\sum_{i\neq0}\tilde{g}^{ij}\left(\tilde{\Gamma}^m_{ij,m}-\tilde{\Gamma}^m_{im,j}+\tilde{\Gamma}^l_{ij}\tilde{\Gamma}^m_{ml}-\tilde{\Gamma}^l_{im}\tilde{\Gamma}^m_{jl}\right)\\
=:&\tilde{\Scal}_0+\tilde{\Scal}_1.
\end{align*}
Now we compute $\tilde{\Scal}_0$ and $\tilde{\Scal}_1$.

Since $\tilde{g}^{0j}$ is equal to zero for any $j$, $j\neq0$, we have 

\begin{align*}
\tilde{\Scal}_0=&\left(\tilde{\Gamma}^m_{00,m}-\tilde{\Gamma}^m_{0m,0}
+\tilde{\Gamma}^l_{00}\tilde{\Gamma}^m_{ml}-\tilde{\Gamma}^l_{0m}\tilde{\Gamma}^m_{0l}\right)\\
=&-\partial_m(r^{-1}\delta^m_m)-r^{-1}\delta^l_mr^{-1}\delta^m_l=r^{-2}n-r^{-2}n^2=-r^{-2}n(n-1)
\end{align*}
and

\begin{align*}
\tilde{\Scal}_1
=&\sum_{i\neq0}\tilde{g}^{ij}\left(\tilde{\Gamma}^m_{ij,m}-\tilde{\Gamma}^m_{im,j}+\tilde{\Gamma}^l_{ij}\tilde{\Gamma}^m_{ml}-\tilde{\Gamma}^l_{im}\tilde{\Gamma}^m_{jl}\right)\\
=&r^{-2}g^{ij}\left(-g_{ij}+\sum_{m\neq0}[\tilde{\Gamma}^m_{ij,m}
-\tilde{\Gamma}^m_{im,j}]
-g_{ij}\delta^m_m
+\sum_{l\neq0}\tilde{\Gamma}^l_{ij}\tilde{\Gamma}^m_{ml}
+g_{im}\delta^m_j\right.\\
&\left.-\sum_{l\neq0}\tilde{\Gamma}^l_{im}\tilde{\Gamma}^m_{jl}
+\delta^l_ig_{jl}\right)\\
=&r^{-2}g^{ij}\left(-g_{ij}+[\Gamma^m_{ij,m}
-\Gamma^m_{im,j}]
-g_{ij}n
+\Gamma^l_{ij}\Gamma^m_{ml}
+g_{ij}n
-\Gamma^l_{im}\Gamma^m_{jl}
+g_{ij}\right)\\
=&r^{-2}g^{ij}\left(\Gamma^m_{ij,m}
-\Gamma^m_{im,j}+\Gamma^l_{ij}\Gamma^m_{ml}
-\Gamma^l_{im}\Gamma^m_{jl}\right)=r^{-2}\Scal,
\end{align*}
where $\Scal$ is the scalar curvature at $x\in N$.

Therefore the scalar curvature in the polar coordinates $p=(r,x^1,\dots,x^n)$ is

\begin{equation}\label{scalar curvature}
\tilde{\Scal}(p)=r^{-2}\big(\Scal(x)-n(n-1)\big),
\end{equation}
where $p=(r,x)\in U$ and $x\in N$ and $\tilde{\Scal}(p)$ is the scalar curvature on $(U,g_{\text{conic}})$ and $\Scal(x)$ is the scalar curvature on $(N,g_N)$.

%%%%%%% curvature tensor %%%%%

Recall that the Riemann curvature tensor and the Ricci tensor can be written using the Christoffel symbols as follows
$$
\R^i_{jkl}=\partial_k\Gamma^i_{jl}+\Gamma^i_{kp}\Gamma^p_{jl}-\partial_l\Gamma^i_{jk}-\Gamma^i_{lp}\Gamma^p_{jk}
$$
and
$$
\Ric_{ij}=\partial_m\Gamma^m_{ij}+\Gamma^m_{mp}\Gamma^p_{ij}-\partial_j\Gamma^m_{mi}-\Gamma^m_{jp}\Gamma^p_{im}.
$$
Now we express the Riemann tensor $\tilde{\R}^i_{jkl}$ in terms of the Riemann curvature tensor $\R^i_{jkl}$.

Let $i=0$ and $i,j,k$ be nonzero, then
$$
\tilde{\R}^0_{jkl}=0,
$$
also
$$
\tilde{\R}^i_{0kl}=\tilde{\R}^i_{j0l}=\tilde{\R}^i_{j00}=\tilde{\R}^i_{000}=0
$$
and
$$
\tilde{\R}^i_{00l}=\tilde{\R}^i_{0l0}=-r^{-2}\delta^i_l.
$$

If none of the indices $i,j,k,l$ is zero, we obtain
\begin{equation}\label{Riemann curvature}
\tilde{\R}_{ijkl}(p)=r^{-2}\big(\R_{ijkl}(x)-g_{ip}(x)g_{jm}(x)(\delta^p_k\delta^m_l-\delta^p_l\delta^m_k)\big).
\end{equation}

Similarly, for the tensor Ricci
\begin{equation}\label{Ricci curvature}
\tilde{\Ric}_{ij}(p)=r^{-2}\left(\Ric_{ij}(x)-(n-1)g_{ij}(x)\right).
\end{equation}

\end{subsection}

\begin{subsection}{The Laplace operator on the infinite cone}\label{section Laplace on infinite cone}
In this section we obtain an expression (\ref{Laplace on infinite cone}) of the Laplace-Beltrami operator on an infinite cone, and show that it satisfies the scaling property. 

The Laplace-Beltrami operator $\Delta:=d^{\dag}d$ acting on $C^{\infty}_c((0,\varepsilon)\times N)$ in the Hilbert space $L^2((0,\infty)\times N)$ with measure $r^{n}\dvol_Ndr$ can be written in terms of partial derivatives with respect to the local coordinates. We have $d^{\dag}=-*d*$, where $*$ is the Hodge-star operator on $(U,g_{\text{conic}})$.

Let $p=(r,x^1,\dots,x^n)\in U$ and $f(r,x^1\dots,x^n)\in C^{\infty}_c(U)$. Denote $g:=\det g_N=\det(g_{ij})$ and $(g^{ij})=(g_{ij})^{-1}$. Some computations that will be used later are given in the next proposition. Choose coordinates on $(N,g_N)$ such that the metric is diagonal.

\begin{prop}\label{proposition computations}
Let $A\in C^{\infty}_c(U)$. Then
\begin{itemize}
\item[a)]$*(Adr)=Ar^ng^{1/2}dx^1\wedge\dots\wedge dx^n$;
\item[b)]$*(Adx^i)=A(-1)^ir^{n-2}g^{ii}g^{1/2}dr\wedge dx^1\wedge\dots\hat{dx^i}\dots\wedge dx^n$;
\item[c)]$*(Adr\wedge dx^1\wedge\dots\wedge dx^n)=Ar^{-n}g^{-1/2}$.
\end{itemize}
\end{prop}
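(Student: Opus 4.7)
The plan is to compute everything directly from the defining relation $\alpha\wedge *\beta=\langle\alpha,\beta\rangle_{g_{\text{conic}}}\,\dvol_{g_{\text{conic}}}$ for the Hodge star, once the volume form and inverse metric of $g_{\text{conic}}$ are recorded. From $\tilde g_{00}=1$, $\tilde g_{0i}=0$, $\tilde g_{ij}=r^2 g_{ij}$ (with the coordinates on $N$ chosen so that $g_{ij}$ is diagonal, which makes $\tilde g_{ij}$ diagonal), one has $\det(\tilde g_{ij})=r^{2n}g$, and hence
$$
\dvol_{g_{\text{conic}}}=r^n g^{1/2}\,dr\wedge dx^1\wedge\dots\wedge dx^n,\qquad \tilde g^{00}=1,\qquad \tilde g^{ii}=r^{-2}g^{ii}.
$$

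For (a), note that $\langle A\,dr,A\,dr\rangle = A^2\tilde g^{00}=A^2$, so $*(A\,dr)$ is the unique $n$-form $\omega$ on $U$ satisfying $A\,dr\wedge \omega=A^2\,\dvol_{g_{\text{conic}}}$. The identity $*(A\,dr)=A\,r^n g^{1/2}\,dx^1\wedge\dots\wedge dx^n$ then reads off immediately.

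For (b), I would write the answer as $*(A\,dx^i)=c\,dr\wedge dx^1\wedge\dots\widehat{dx^i}\dots\wedge dx^n$ for an unknown $c$, and determine $c$ from the relation $A\,dx^i\wedge *(A\,dx^i)=A^2\tilde g^{ii}\,\dvol_{g_{\text{conic}}}$. Reordering the wedge $dx^i\wedge dr\wedge dx^1\wedge\dots\widehat{dx^i}\dots\wedge dx^n$ into the standard form $dr\wedge dx^1\wedge\dots\wedge dx^n$ requires moving $dx^i$ past $dr$ and then past $dx^1,\dots,dx^{i-1}$, i.e.\ $i$ transpositions, producing the sign $(-1)^i$. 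Combining with $\tilde g^{ii}\sqrt{\det\tilde g}=r^{-2}g^{ii}\cdot r^n g^{1/2}=r^{n-2}g^{ii}g^{1/2}$ yields $c=A(-1)^i r^{n-2}g^{ii}g^{1/2}$, as claimed.

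For (c), since $A\,dr\wedge dx^1\wedge\dots\wedge dx^n$ is a top-degree form, its Hodge dual is a function. The diagonal form of the inverse metric gives $\langle dr\wedge dx^1\wedge\dots\wedge dx^n,\,dr\wedge dx^1\wedge\dots\wedge dx^n\rangle=\tilde g^{00}\prod_{i=1}^n\tilde g^{ii}=r^{-2n}g^{-1}$, and dividing by the scalar factor $r^n g^{1/2}$ relating $\dvol_{g_{\text{conic}}}$ to $dr\wedge dx^1\wedge\dots\wedge dx^n$ gives $*(A\,dr\wedge dx^1\wedge\dots\wedge dx^n)=A\,r^{-n}g^{-1/2}$. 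The only subtlety anywhere in the argument is the sign bookkeeping in part (b); the rest is direct substitution once the volume form is computed.
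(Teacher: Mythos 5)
Your proposal is correct and follows essentially the same route as the paper: both determine $*(A\,dr)$, $*(A\,dx^i)$ and $*(A\,dr\wedge dx^1\wedge\dots\wedge dx^n)$ from the defining relation $\alpha\wedge *\beta=\langle\alpha,\beta\rangle\,\dvol$, using the diagonal inverse metric $\tilde g^{00}=1$, $\tilde g^{ii}=r^{-2}g^{ii}$ and the volume form $r^ng^{1/2}\,dr\wedge dx^1\wedge\dots\wedge dx^n$, with the sign $(-1)^i$ in part (b) coming from the same reordering of the wedge product. No gaps; the computations match the paper's.
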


\begin{proof}
Let $X\in C^{\infty}_c(U)$.

a) Let $X$ be such that $*(Adr)=Xdx^1\wedge\dots\wedge dx^n$, then by the definition of the Hodge star operator, for any one-form $\alpha dr$ we have
$$
\alpha dr\wedge Xdx^1\wedge\dots\wedge dx^n
=\alpha Ar^ng^{1/2}dr\wedge dx^1\wedge\dots\wedge dx^n,
$$
hence $X=Ar^ng^{1/2}$.

b) Let $X$ be such that $*(Adx^i)=Xdr\wedge dx^1\wedge\dots\hat{dx^i}\dots\wedge dx^n$. Then for any one-form $\alpha dx^i$
$$
\alpha dx^i\wedge Xdr\wedge dx^1\wedge\dots\hat{dx^i}\dots\wedge dx^n
=\alpha Ar^{-2}g^{ii}r^ng^{1/2}dr\wedge dx^1\wedge\dots\wedge dx^n,
$$
hence 
$$
(-1)^i\alpha X=\alpha Ag^{ii}r^{n-2}g^{1/2}.
$$
Consequently, $X=(-1)^i Ag^{ii}r^{n-2}g^{1/2}$.

c) Let $X$ be such that $*(Adr\wedge dx^1\wedge\dots\wedge dx^n)=X$, then for any $n+1$-form $\alpha dr\wedge dx^1\wedge\dots\wedge dx^n$
$$
\alpha dr\wedge dx^1\wedge\dots\wedge dx^n X
=\alpha Ar^{-2n} g^{-1}r^ng^{1/2}dr\wedge dx^1\wedge\dots\wedge dx^n.
$$
Hence $X=Ar^{-n}g^{-1/2}$.
\end{proof}

Let $f(r,x^1,\dots,x^n)$ be a smooth function with compact support in $(U,g_{\text{conic}})$ and apply the operator $\Delta=-*d*d$ to it. By Proposition \ref{proposition computations} we obtain

\begin{equation}\label{laplace1}
\begin{split}
f(r,x^1,\dots,x^n)
\stackrel{d}{\mapsto}&
\partial_rfdr+\sum_{i=1}^n\partial_{x^i}fdx^i\\
\stackrel{*}{\mapsto}&
r^ng^{1/2}\partial_rfdx^1\wedge\dots\wedge dx^n\\
&+\sum_{i=1}^n(-1)^ir^{n-2}g^{ii}g^{1/2}\partial_{x^i}fdr\wedge dx^1\wedge\dots\hat{dx^i}\dots\wedge dx^n\\
\stackrel{d}{\mapsto}&
dr\wedge dx^1\wedge\dots\wedge dx^n\bigg(r^ng^{1/2}\partial_r^2f+nr^{n-1}g^{1/2}\partial_rf\\
&+\sum_{i=1}^n\big(r^{n-2}g^{ii}g^{1/2}\partial^2_{x^i}f
+r^{n-2}g^{1/2}\partial_{x^i}f\partial_{x^i}g^{ii}\bigg.\\
&+\bigg.\frac12r^{n-2}g^{-1/2}g^{ii}\partial_{x^i}f\partial_{x^i}g\big)\bigg)\\
\stackrel{*}{\mapsto}&
\partial_r^2f+nr^{-1}\partial_rf\\
&+\sum_{i=1}^n\big(r^{-2}g^{ii}\partial^2_{x^i}f
+r^{-2}\partial_{x^i}f\partial_{x^i}g^{ii}
+\frac12r^{-2}g^{-1}g^{ii}\partial_{x^i}f\partial_{x^i}g\big)\\
\stackrel{-}{\mapsto}&
-\partial_r^2f-nr^{-1}\partial_rf\\
&-r^{-2}\sum_{i=1}^n\big(g^{ii}\partial^2_{x^i}f
+\partial_{x^i}f\partial_{x^i}g^{ii}
+\frac12g^{-1}g^{ii}\partial_{x^i}f\partial_{x^i}g\big).
\end{split}
\end{equation}

Now consider the Laplace-Beltrami operator $\Delta_N$ on $(N,g_N)$
\begin{equation}\label{laplace2}
\begin{split}
\Delta_Nf=&-*d*df=-*d*\sum_{i=1}^n\partial_{x^i}fdx^i\\
=&-*d\left(\sum_{i=1}^n(-1)^ig^{1/2}g^{ii}\partial_{x^i}fdx^1\wedge\dots\hat{x^i}\dots\wedge dx^n\right)\\
=&-*\bigg(g^{1/2}g^{ii}\partial_{x^i}^2f
+g^{1/2}\partial_{x^i}f\partial_{x^i}g^{ii}\\
&+\frac12g^{-1/2}g^{ii}\partial_{x^i}f\partial_{x^i}g\bigg)dx^1\wedge\dots\wedge dx^n\\
=&-\left(g^{ii}\partial_{x^i}^2f
+\partial_{x^i}f\partial_{x^i}g^{ii}
+\frac12g^{-1}g^{ii}\partial_{x^i}f\partial_{x^i}g\right).
\end{split}
\end{equation}
By (\ref{laplace1}) and (\ref{laplace2})
$$
\Delta=-\partial_r^2-nr^{-1}\partial_r+r^{-2}\Delta_N.
$$

In (\ref{bijective map}), we defined a unitary map between the Hilbert spaces

\begin{equation*}
\Psi:L^2((0,\infty),L^2(N)) \to L^2((0,\infty)\times N),
\end{equation*}
$$
\Psi: f\mapsto r^{-n/2}f
$$
for $f\in L^2((0,\infty),L^2(N))$.

Define the operator 
$$
T:=\Psi^{-1}\Delta\Psi,
$$
acting on $C^{\infty}_c((0,\infty),C^{\infty}(N))$ in the Hilbert space $L^2((0,\infty),L^2(N))$.

Then
\begin{equation*}
\begin{split}
Tf=&r^{n/2}\Delta r^{-n/2}f\\
=&r^{n/2}\left(-\partial_r^2-nr^{-1}\partial_r+r^{-2}\Delta_N\right)r^{-n/2}f\\
=&-r^{n/2}\partial_r\left(r^{-n/2}\partial_rf-\frac{n}{2}r^{-n/2-1}f\right)
-nr^{n/2-1}\left(r^{-n/2}\partial_rf-\frac{n}{2}r^{-n/2-1}f\right)\\
&+r^{-2}\Delta_Nf\\
=&-\partial_r^2f-\frac{n}{2}\left(\frac{n}{2}+1\right)r^{-2}f+r^{-2}\frac{n^2}{2}f+r^{-2}\Delta_Nf\\
=&-\partial_r^2f+r^{-2}\left(\frac{n}{2}\left(\frac{n}{2}-1\right)f+\Delta_Nf\right).
\end{split}
\end{equation*}

Finally
\begin{equation}\label{Laplace on infinite cone}
T=-\partial_r^2+r^{-2}\left(\frac{n}{2}\left(\frac{n}{2}-1\right)+\Delta_N\right).
\end{equation}

Note that since
$$
\frac{n}{2}(\frac{n}{2}-1)+\Delta_N=\left(\frac{n}{2}-\frac12\right)^2+\Delta_N-\frac14\geq-\frac14,
$$ 
the operator $T$ is bounded below. It is also symmetric. The Friedrichs extension of $T$ satisfies the scaling property, see Lemma~\ref{integrability condition}. Below we deal only with this extension so to simplify the notation denote it by $T$.

\end{subsection}

\end{section}

\begin{section}{The expansions}

\begin{subsection}{The resolvent trace expansion}

Now we will consider resolvent of operator $T$. Since the manifold $(N,g_N)$ is compact, the Laplace-Beltrami operator $\Delta_N$ has discrete spectrum.  There is a basis of the Hilbert space $L^2(N)$ that consists of the corresponding eigenfunctions. Let $\lambda\in \Spec\Delta_N$ be eigenvalues of $\Delta_N$. Then 
$$
(T+z^2)^{-1}=\otimes_{\lambda}\left(-\partial_r^2+r^{-2}\left(\lambda+\frac{n}{2}\left(\frac{n}{2}-1\right)\right)+z^2\right)^{-1}\otimes_{\lambda}\pi_{\lambda},
$$
where $\pi_{\lambda}$
is the projection on the $\lambda$-eigenspace of $\Delta_N$.

%%%%%%%%%%%% Bessel functions %%%%%%%%%%%%
The proof of the next lemma may appear similar to \cite[Lemma 4.1]{BS3}, but here our proof necessitates non-trivial subtle differences.
\begin{lemm}\label{Bessel functions}
Let $\nu=\sqrt{\lambda+(n-1)^2/4}$, $\text{Im }z^2\neq0$ and $0<r_1\leq r_2<\infty$, then the resolvent 
$$
\left(-\partial_r^2+r^{-2}\left(\lambda+\frac{n}{2}\left(\frac{n}{2}-1\right)\right)+z^2\right)^{-1}
$$
is an integral operator with kernel given by
$$
\left(-\partial_r^2+r^{-2}\left(\lambda+\frac{n}{2}\left(\frac{n}{2}-1\right)\right)+z^2\right)^{-1}(r_1,r_2)=(r_1r_2)^{1/2}I_\nu(r_1z)K_\nu(r_2z),
$$
where $I_\nu(r_1z)$ and $K_\nu(r_2z)$ are the modified Bessel functions of the first and second type respectively.
\end{lemm}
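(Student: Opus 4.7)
The operator in the lemma acts on the half-line, so the strategy is to write down two fundamental solutions of the associated homogeneous ODE, choose the one that is $L^2$ near $0$ (subject to the Friedrichs boundary condition) and the one that is $L^2$ near $\infty$, and then build the Green's function by the classical Sturm--Liouville recipe $G(r_1,r_2)=u_-(r_<)u_+(r_>)/W$, with $W$ the (constant) Wronskian.

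First I would reduce the ODE
\[
\Bigl(-\partial_r^2+r^{-2}\bigl(\lambda+\tfrac{n}{2}(\tfrac{n}{2}-1)\bigr)+z^2\Bigr)u=0
\]
to the modified Bessel equation by the substitution $u(r)=r^{1/2}f(rz)$. A short calculation, using $\tfrac{n}{2}(\tfrac{n}{2}-1)=\tfrac{(n-1)^2-1}{4}$ and setting $s=rz$, gives
\[
s^2 f''(s)+sf'(s)-(s^2+\nu^2)f(s)=0,\qquad \nu^2=\lambda+\tfrac{(n-1)^2}{4},
\]
so the two fundamental solutions are $u_-(r)=r^{1/2}I_\nu(rz)$ and $u_+(r)=r^{1/2}K_\nu(rz)$.

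Next I would identify which solution carries which boundary condition. From $I_\nu(s)\sim s^\nu/(2^\nu\Gamma(\nu+1))$ as $s\to 0$, we have $u_-(r)\sim c\, r^{\nu+1/2}$, so $u_-$ is the solution in the Friedrichs domain at $r=0$ (it is $L^2$ near $0$ and satisfies the correct Dirichlet-type condition there). From $K_\nu(s)\sim \sqrt{\pi/(2s)}\,e^{-s}$ as $s\to\infty$, and with $\mathrm{Im}\,z^2\neq 0$ choosing the branch with $\mathrm{Re}\,z>0$, we have $u_+(r)\sim\sqrt{\pi/(2z)}\,e^{-rz}$, which is $L^2$ near $\infty$.

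Then I would compute the Wronskian. Using the classical identity $I_\nu(s)K_\nu'(s)-I_\nu'(s)K_\nu(s)=-1/s$, a direct computation gives
\[
W(u_-,u_+)=u_-u_+'-u_-'u_+=rz\bigl(I_\nu(rz)K_\nu'(rz)-I_\nu'(rz)K_\nu(rz)\bigr)=-1.
\]
The standard jump-condition argument for $(-\partial_r^2+q)G=\delta(r-r_2)$ then gives, for $r_1\leq r_2$,
\[
G(r_1,r_2)=\frac{u_-(r_1)u_+(r_2)}{-W}=(r_1r_2)^{1/2}I_\nu(r_1z)K_\nu(r_2z),
\]
as claimed.

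The main delicate point is the boundary analysis at $r=0$: when $\nu<1$ the operator is in the limit-circle case and one must verify that $u_-$ genuinely lies in the Friedrichs domain and that $u_+$ does not. I would handle this by working in the quadratic-form description of $T$: integrating by parts one checks that the form $q(u)=\int_0^\infty(|u'|^2+r^{-2}(\lambda+\tfrac{n}{2}(\tfrac{n}{2}-1))|u|^2)\,dr$ on $C_c^\infty((0,\infty))$ closes on functions whose leading behavior at $0$ is $r^{\nu+1/2}$ rather than $r^{-\nu+1/2}$, matching $u_-$ and excluding $u_+$. This then identifies the integral kernel $G$ above with the resolvent of the Friedrichs extension of $T$, completing the proof.
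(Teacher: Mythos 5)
Your proposal follows essentially the same route as the paper: reduce to the modified Bessel equation via $u=r^{1/2}w$, select $r^{1/2}I_\nu(rz)$ and $r^{1/2}K_\nu(rz)$ by the boundary conditions at $0$ and $\infty$, evaluate the Wronskian with $I_\nu'(x)K_\nu(x)-I_\nu(x)K_\nu'(x)=x^{-1}$, and assemble the kernel by the Sturm--Liouville formula (the paper cites Dunford--Schwartz, Theorem XIII.3.16, for this last step). Your explicit quadratic-form argument identifying $r^{1/2}I_\nu(rz)$ as the solution in the Friedrichs domain in the limit-circle regime $\nu<1$ is a welcome addition that the paper's proof leaves implicit.
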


\begin{proof}
Let $v_1(r,z)$ and $v_2(r,z)$ be two linearly independent non-zero solutions of
\begin{equation}\label{equ}
\left(-\partial_r^2+r^{-2}\left(\lambda+\frac{n}{2}\left(\frac{n}{2}-1\right)\right)+z^2\right)u(r)=0,
\end{equation}
then by \cite[Theorem XIII.3.16]{DS}, the resolvent is an integral operator and the kernel of the resolvent is
\begin{equation}\label{formula_from_DS}
\begin{split}
\left(-\partial_r^2+r^{-2}\left(\lambda+\frac{n}{2}\left(\frac{n}{2}-1\right)\right)+z^2\right)^{-1}(r_1,r_2)\\
=(v_1'v_2-v_1v_2')^{-1}(r_1,z)v_1(r_1,z)v_2(r_2,z),
\end{split}
\end{equation}
for $0<r_1<r_2<\infty$.

We now find $v_1$ and $v_2$ by solving (\ref{equ}). Put 
$$
u(r)=:r^{1/2}w(r),
$$
then
$$
\partial_ru(r)=\frac12r^{-1/2}w(r)+r^{1/2}\partial_rw(r)
$$
and
$$
\partial_r^2u(r)=-\frac1 4 r^{-3/2}w(r)+r^{-1/2}\partial_r w(r)+r^{1/2}\partial_r^2w(r).
$$
Hence (\ref{equ}) becomes the modified Bessel equation
$$
\left(r^2\partial_r^2+r\partial_r-\left(r^2z^2+\lambda+\frac{(n-1)^2}{4}\right)\right)w(r)=0.
$$
Since $\nu=\sqrt{\lambda+(n-1)^2/4}$, the general solution is generated by the modified Bessel functions
$$
w(r)=C_1I_{\nu}(rz)+C_2K_{\nu}(rz),
$$
where $C_1,C_2\in\mathbb{R}$. Substitute $w(r)=r^{-1/2}u(r)$ to obtain
$$
u(r)=C_1r^{1/2}I_{\nu}(rz)+C_2r^{1/2}K_{\nu}(rz).
$$

Note that the modified Bessel function of the first kind, $I_{\nu}(r)$ with $\nu>0$, grows exponentially as $r\to\infty$. It tends to zero as $r\to0$. On the other hand the modified Bessel function of the second kind $K_{\nu}(r)$ tends to zero as $r\to\infty$, and grows as $r\to0$. Using the boundary condition at $r=0$ and the boundary condition at infinity $\lim_{r\to\infty} u(r)=0$, we derive two linearly independent solutions of (\ref{equ})
$$
v_1(r)=r^{1/2}I_{\nu}(rz)
$$
and
$$
v_2(r)=r^{1/2}K_{\nu}(rz).
$$

To find the kernel of the resolvent we use the formula (\ref{formula_from_DS}). Since $I_{\nu}'(x)K_{\nu}(x)-I_{\nu}(x)K_{\nu}'(x)=x^{-1}$,
\begin{equation}
\begin{split}
&(v_1'v_2-v_1v_2')^{-1}(r_1,z)\\
=&\left(\frac12r_1^{-1/2}I_{\nu}'(r_1z)+zr_1^{1/2}I'_{\nu}(r_1z)\right)r^{1/2}K_{\nu}(r_1z)\\
&-r_1^{1/2}I_{\nu}(r_1z)\left(\frac12r_1^{-1/2}K_{\nu}'(r_1z)+zr_1^{1/2}K'_{\nu}(r_1z)\right)\\
=&r_1z\big(I_{\nu}'(r_1z)K_{\nu}(r_1z)-I_{\nu}(r_1z)K_{\nu}'(r_1z)\big)
=1.
\end{split}
\end{equation}
By (\ref{formula_from_DS})

\begin{equation}
\begin{split}
&\left(-\partial_r^2+r^{-2}\left(\lambda+\frac{n}{2}\left(\frac{n}{2}-1\right)\right)+z^2\right)^{-1}(r_1,r_2)\\
=&(r_1r_2)^{1/2}I_\nu(r_1z)K_\nu(r_2z).
\end{split}
\end{equation}
\end{proof}

Since $(M,g)$ is a non-complete manifold, $T^{-1}$ may be not trace class, but for some $\varphi_1,\varphi_2\in C^{\infty}_c(M)$
$$
\varphi_1 T^{-1}\varphi_2\in C_p(L^2((0,\varepsilon),L^2(N))),
$$
where $C_p(H)$ is the $p$-th Schatten class of operators. It follows from the resolvent identity \cite[Theorem~5.13]{W} and the H\"older inequality for Schatten norms, that for $d>p$ and $\varphi\in C^{\infty}_c(M)$
$$
\varphi(T+z^2)^{-d}\in C_1(L^2(M)),
$$
with uniform trace norm estimate in
$$
\{z\in\mathbb{C} \;|\; |argz|<\delta\}, \;\;\;\;\;\;\;\;\;\; 0<\delta<\pi/2.
$$

It is shown in \cite[pp. 400-409]{BS2} that for any function $\varphi\in C^{\infty}_c(\mathbb{R})$ the operator $\varphi(r)(T+z^2)^{-d}$ is trace class for $d>\dim M/2=m/2$.

Let $\varphi$ be a smooth function on $(M,g)$ with support in the neighbourhood $(U,g_{\text{conic}})$ of the singularity, such that it depends only on the radial coordinate $r$. For each fixed $r\in(0,\varepsilon)$ and $p=(r,x^1,\dots,x^n)\in M$ by (\ref{local_resolvent_expansion}), we have the expansion of the heat kernel along the diagonal

\begin{equation}\label{local resolvent exp}
\begin{split}
&\varphi(r)(T+z^2)^{-d}(p)\sim_{|z|\to\infty}\\
&(4\pi)^{-\frac{m}{2}}\varphi(r)\sum_{j=0}^{\infty}z^{-2d+m-2j}u_{j}(p)\frac{\Gamma(-\frac{m}{2}+d+j)}{(d-1)!}.
\end{split}
\end{equation}

\begin{prop} \label{interior}
Let $\varphi(r)$ be a smooth function with compact support near $r=0$ and with $\varphi(r)\equiv1$ in a small neighbourhood of $r=0$. Let $d>m/2$ and $z\in\mathbb{C}$ such that $|arg(z)|<\frac{\pi}{2}$, then
\begin{equation*}
\begin{split}
&\tr (\varphi(r)(T+z^2)^{-d})\\
&=\int_0^{\infty}tr_{L^2(N)}(\varphi(r)(T+z^2)^{-d})dr\\
&=\int_0^{\infty}\varphi(r)\frac{r^{2d-1}}{(d-1)!}\left(-\frac{1}{2rz}\frac{\partial}{\partial (rz)}\right)^{d-1}\sum_{\nu}I_\nu(rz)K_\nu(rz)dr\\
&=\int_0^{\infty}\varphi(r)\sigma(r,rz)dr.
\end{split}
\end{equation*}
Above we sum over all $\nu=\sqrt{\lambda+(n-1)^2/4}$ such that $\lambda\in\Spec\Delta_N$, and 
$$
\sigma(r,rz):=\frac{r^{2d-1}}{(d-1)!}\left(-\frac{1}{2rz}\frac{\partial}{\partial (rz)}\right)^{d-1}\sum_{\nu}I_\nu(rz)K_\nu(rz).
$$
\end{prop}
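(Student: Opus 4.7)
The plan is to combine three ingredients already in hand: the spectral decomposition of $\Delta_N$, the explicit resolvent kernel from Lemma~\ref{Bessel functions}, and the Trace Lemma. Since $(N,g_N)$ is closed, $\Delta_N$ has discrete spectrum and the Hilbert space decomposes as an orthogonal sum $L^2((0,\infty),L^2(N))=\bigoplus_{\lambda\in\Spec\Delta_N}L^2((0,\infty))\otimes E_\lambda$, where $E_\lambda$ is the $\lambda$-eigenspace of $\Delta_N$. On each summand, the expression (\ref{Laplace on infinite cone}) for $T$ restricts to the one-dimensional radial operator $-\partial_r^2+r^{-2}(\lambda+\tfrac{n}{2}(\tfrac{n}{2}-1))$, and Lemma~\ref{Bessel functions} supplies its resolvent kernel, which on the diagonal reads $r\,I_\nu(rz)K_\nu(rz)$ with $\nu=\sqrt{\lambda+(n-1)^2/4}$. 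Summing over $\lambda$ with multiplicity (i.e.\ tracing against $\pi_\lambda$) gives the diagonal of $(T+z^2)^{-1}$ on $L^2(N)$.

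To pass from $d=1$ to a general $d>m/2$ I would use the elementary identity
\begin{equation*}
(T+z^2)^{-d}=\frac{(-1)^{d-1}}{(d-1)!}\Bigl(\frac{\partial}{\partial z^2}\Bigr)^{d-1}(T+z^2)^{-1},
\end{equation*}
which follows by induction from $\partial_{z^2}(T+z^2)^{-1}=-(T+z^2)^{-2}$. The crucial book-keeping step is the change of variable $y=rz$: for any smooth function $h(y)$, one has $\partial_{z^2}h(rz)=\tfrac{1}{2z}r h'(rz)=r^2\cdot\tfrac{1}{2rz}\partial_{rz}h(rz)$, and $r$ commutes with $\partial_z$. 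Iterating $d-1$ times and multiplying by the factor $r$ in the diagonal kernel yields exactly $r^{2d-1}\bigl(-\tfrac{1}{2rz}\tfrac{\partial}{\partial(rz)}\bigr)^{d-1}I_\nu(rz)K_\nu(rz)$, with the sign absorbed into the expression.

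Finally, by the remark preceding the proposition, $\varphi(r)(T+z^2)^{-d}$ is trace class for $d>m/2$, so the Trace Lemma (Lemma~\ref{trace_lemma}) applies in the Hilbert space $L^2((0,\infty),L^2(N))$. It gives
\begin{equation*}
\tr\bigl(\varphi(r)(T+z^2)^{-d}\bigr)=\int_0^{\infty}\tr_{L^2(N)}\bigl(\varphi(r)(T+z^2)^{-d}(r,r)\bigr)\,dr,
\end{equation*}
and since $\varphi(r)$ is scalar multiplication by a function of $r$ only, it passes outside the fibrewise trace, leaving the stated expression with $\sigma(r,rz)$.

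The main technical obstacle will be justifying the two interchanges: (i) that the derivative $(\partial/\partial z^2)^{d-1}$ may be moved past the sum over $\nu$ on the diagonal, and (ii) that the fibrewise trace over $L^2(N)$ may be taken term-by-term in this sum. Both are handled by the trace-class bound for $d>m/2$ together with the exponential decay of $I_\nu(rz)K_\nu(rz)$ in $\nu$ for fixed $rz$ with $|\arg z|<\pi/2$, which provides the dominating function needed for Fubini/dominated convergence. Care near $r=0$ is needed because individual $K_\nu(rz)$ are singular there, but the factor $r^{2d-1}$ and the cut-off $\varphi$ ensure integrability; this is precisely what the Singular Asymptotics Lemma of Section~\ref{regularized integrals} is designed to exploit in the next step of the argument.
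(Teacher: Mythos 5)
Your argument is correct and follows essentially the same route as the paper: the Trace Lemma gives the first equality, the diagonal resolvent kernel $rI_\nu(rz)K_\nu(rz)$ comes from Lemma~\ref{Bessel functions} together with the spectral decomposition of $\Delta_N$, and the identity $(T+z^2)^{-d}=\frac{1}{(d-1)!}\bigl(-\frac{1}{2z}\partial_z\bigr)^{d-1}(T+z^2)^{-1}$ (equivalent to your $\partial_{z^2}$ formulation) combined with the substitution $y=rz$ produces the stated $\sigma(r,rz)$. Your additional remarks on justifying the interchanges go beyond what the paper records but do not change the structure of the proof.
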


\begin{proof}
The first equality follows from Lemma \ref{trace_lemma}. To prove that second equality we use Lemma \ref{Bessel functions} and the formula
\begin{align*}
(T+z^2)^{-d}
=&\frac{1}{(d-1)!}\left(-\frac{1}{2z}\frac{\partial}{\partial z}\right)^{d-1}(T+z^2)^{-1}.
\end{align*}
We obtain
\begin{align*}
tr_{L^2(N)}(T+z^2)^{-d}
=&\frac{1}{(d-1)!}\left(-\frac{1}{2z}\frac{\partial}{\partial z}\right)^{d-1}tr_{L^2(N)}(T+z^2)^{-1}\\
=&\frac{r^{2d-1}}{(d-1)!}\left(-\frac{1}{2rz}\frac{\partial}{\partial(rz)}\right)^{d-1}\sum_{\nu}I_\nu(rz)K_\nu(rz).
\end{align*}
\end{proof}

In Proposition \ref{interior}, we define the following function

\begin{equation}
\sigma(r,\zeta)
=tr_{L^2(N)}(T+\zeta^2/r^2)^{-d}.
\end{equation}

By (\ref{local resolvent exp}), we have an asymptotic expansion
\begin{align*}
&tr_{L^2(N)}(T+\zeta^2/r^2)^{-d}\sim_{\zeta\to\infty}\\
&(4\pi)^{-\frac{m}{2}}\sum_{j=0}^{\infty}
(\zeta/r)^{-2d+m-2j}\int_Nr^{m-1}u_{j}(p)\dvol_N\frac{\Gamma(-\frac{m}{2}+d+j)}{(d-1)!},
\end{align*}
hence

\begin{equation}\label{sigma_expansion}
\sigma(r,\zeta)
\sim_{\zeta\to\infty}\sum_{j=0}^{\infty}\zeta^{-2d+m-2j}\sigma_{j}(r),
\end{equation}
where

\begin{equation}\label{sigma expansion terms}
\begin{split}
\sigma_{j}(r)
=&(4\pi)^{-\frac{m}{2}}r^{2d-m+2j}\int_Nr^{m-1}u_{j}(p)\dvol_N\frac{\Gamma(-\frac{m}{2}+d+j)}{(d-1)!}\\
=&(4\pi)^{-\frac{m}{2}}r^{2d-1+2j}\int_Nu_{j}(p)\dvol_N\frac{\Gamma(-\frac{m}{2}+d+j)}{(d-1)!}.
\end{split}
\end{equation}
In particular we compute $\sigma_0(r)$ and $\sigma_1(r)$. By  Theorem \ref{local_heat_expansion}, $u_0(p)\equiv1$ and $u_1(p)=\frac16\tilde{\Scal}(p)$, where $\tilde{\Scal}(p)$ is the scalar curvature on $(M,g)$, therefore

$$
\sigma_{0}(r)=(4\pi)^{-\frac{m}{2}}r^{2d-1}\frac{\Gamma(d-\frac{m}{2})}{(d-1)!}\Vol(N)
$$
and

\begin{equation*}
\begin{split}
\sigma_{1}(r)
=&(4\pi)^{-\frac{m}{2}}\frac{\Gamma(d-\frac{m}{2}+1)}{(d-1)!}r^{2d-1+2}\int_Nu_1(p)\dvol_N\\
=&(4\pi)^{-\frac{m}{2}}\frac{\Gamma(d-\frac{m}{2}+1)}{(d-1)!}r^{2d+1}\int_N\frac{\tilde{\Scal}(p)}{6}\dvol_N\\
=&(4\pi)^{-\frac{m}{2}}\frac{\Gamma(d-\frac{m}{2}+1)}{6(d-1)!}r^{2d-1}\int_N(\Scal(x)-n(n-1))\dvol_N,
\end{split}
\end{equation*}
where as before $\tilde{\Scal}(p)$ is the scalar curvature of $(M,g)$ at $p\in M$ and $\Scal(x)$ is the scalar curvature of $(N,g_N)$ at $x\in N$.

%%%%%% conditions of SAL %%%%%%%%

Now we show that the function $\sigma(r,\zeta)$ satisfies the three conditions of the Singular Asymptotics Lemma (Lemma \ref{SAL}).  First we observe that $\sigma(r,\zeta)$ is $C^{\infty}$ with respect to $r$. Moreover according to \cite[Section 3]{BS2}, $\sigma(r,\zeta)$ has analytic derivatives with respect to $\zeta$. The second condition of Lemma \ref{SAL} is satisfied due to $(\ref{sigma_expansion})-(\ref{sigma expansion terms})$. The following lemma gives the proof of the third property. Denote $\partial^j\sigma(r,\zeta):=\frac{\partial^j}{\partial r^j}\sigma(r,\zeta)$.

\begin{lemm}(Integrability condition)\label{integrability condition}
For $j\in\mathbb{N}$ and $0<|arg\zeta|<\delta<\pi/2$ with $|\zeta|=c_0$, there is a constant $c(c_0,j)$ such that the following is satisfied uniformly for $0\leq t\leq1$
$$
\int_0^1\int_0^1s^j\left|\partial^j\sigma(st,s\zeta)\right|dsdt\leq c(c_0,j).
$$
\end{lemm}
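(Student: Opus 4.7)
My plan is to exploit the factorisation $\sigma(r,\zeta)=r^{2d-1}F(\zeta)$, where
\[
F(\zeta):=\frac{1}{(d-1)!}\left(-\frac{1}{2\zeta}\frac{\partial}{\partial \zeta}\right)^{d-1}\sum_{\nu}I_\nu(\zeta)K_\nu(\zeta),
\]
which is read off immediately from Proposition~\ref{interior} and reflects the scaling property of the Friedrichs extension of $T$. Because $d\in\mathbb{N}$, the factor $r^{2d-1}$ is polynomial in $r$, so $\partial_r^{j}\sigma(r,\zeta)=\frac{(2d-1)!}{(2d-1-j)!}\,r^{2d-1-j}F(\zeta)$ for $0\le j\le 2d-1$ and vanishes for $j>2d-1$, rendering the latter case trivial.

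For $j\le 2d-1$ the substitution $r=st$ yields
\[
\int_0^1\!\!\int_0^1 s^j|\partial_r^{j}\sigma(st,s\zeta)|\,ds\,dt=\frac{(2d-1)!}{(2d-1-j)!(2d-j)}\int_0^1 s^{2d-1}|F(s\zeta)|\,ds,
\]
reducing the problem to a uniform bound on the $s$-integral for $\zeta$ on the arc $\{|\zeta|=c_0,\;|\arg\zeta|<\delta<\pi/2\}$. Away from the origin $F$ is analytic, since $(T+\zeta^2)^{-d}$ is analytic for $\zeta^2\notin(-\infty,0]$ and the arc is bounded away from the imaginary axis, so the only delicate point is the behaviour of $F$ as $\zeta\to 0$.

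The main obstacle, then, is the analysis of $F$ at the origin. I would use the small-$\zeta$ structure of $I_\nu(\zeta)K_\nu(\zeta)$: a part analytic in $\zeta^2$ together with a term proportional to $(\zeta/2)^{2\nu}$ times an analytic factor, arising from $I_\nu^2$ in the relation $K_\nu=\frac{\pi}{2\sin(\pi\nu)}(I_{-\nu}-I_\nu)$. The operator $(-\frac{1}{2\zeta}\partial_\zeta)^{d-1}$ annihilates the purely analytic part up to degree $2(d-1)$ and sends $\zeta^{2\nu}$ to a multiple of $\zeta^{2\nu-2d+2}$. Only the finitely many $\nu_i<d-1$ produce singularities; the leading one is $\nu_0=(n-1)/2$, corresponding to $\lambda=0\in\Spec\Delta_N$, and it gives $|F(\zeta)|=O(|\zeta|^{m-2d})$ as $\zeta\to 0$. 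Absolute convergence of the $\nu$-sum is guaranteed by the uniform Bessel asymptotics $I_\nu(\zeta)K_\nu(\zeta)\sim(2\sqrt{\nu^2+\zeta^2})^{-1}$ combined with Weyl's law $\nu_j\sim j^{1/n}$ on $(N,g_N)$, under the standing hypothesis $d>m/2$.

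Feeding $|F(s\zeta)|\lesssim(s|\zeta|)^{m-2d}$ into the $s$-integral gives $s^{2d-1}|F(s\zeta)|\lesssim c_0^{m-2d}s^{m-1}$, which is integrable on $[0,1]$ since $m\ge 1$; the contribution from $s$ bounded away from zero is controlled by the $L^\infty$-norm of $F$ on the compact complement. Combining the two regimes furnishes the desired uniform bound $c(c_0,j)$ and proves the lemma.
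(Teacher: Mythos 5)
Your reduction is sound, and its first half coincides with the paper's own argument: the factorisation $\sigma(r,\zeta)=r^{2d-1}F(\zeta)$ is exactly the scaling property the paper derives abstractly via the unitary dilations $(U_tf)(r)=t^{1/2}f(tr)$ and the relation $U_tT=t^{-2}TU_t$ for the Friedrichs extension (on the exact cone $T_t=T$, so the paper's identity $\sigma(st,s\xi)=t^{2d-1}\sigma_t(s,s\xi)$ is precisely your homogeneity). After that the two arguments part ways. The paper bounds $\int_0^1 s^{2d-1}|F(s\xi)|\,ds=\int_0^1|\sigma(s,s\xi)|\,ds$ in one stroke with the Trace Lemma: the integrand is the pointwise trace norm of the diagonal kernel of the \emph{fixed} operator $\varphi(T+\xi^2)^{-d}$, so the integral is dominated by $\Vert\varphi(T+\xi^2)^{-d}\Vert_{\tr}$, finite and uniform on the arc because $d>m/2$ makes this operator trace class. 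You instead estimate $F$ pointwise near $\zeta=0$ by Bessel asymptotics; that buys explicitness (one sees the exponent $m-2d$ and hence the $s^{m-1}$ integrability directly) at the price of controlling an infinite sum of special functions by hand.

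That is where your write-up has a genuine gap. The asymptotic you invoke, $I_\nu(\zeta)K_\nu(\zeta)\sim(2\sqrt{\nu^2+\zeta^2})^{-1}$, by itself yields a \emph{divergent} $\nu$-sum: Weyl's law gives $\nu_j\sim cj^{1/n}$, so $\sum_j\nu_j^{-1}=\infty$. What actually makes the sum converge is that each application of $-\tfrac{1}{2\zeta}\partial_\zeta$ gains a factor of order $(\nu^2+|\zeta|^2)^{-1}$, uniformly in $\nu$ and in $\zeta$ ranging over the whole segment from $0$ to $\xi$, so that the summand is $O(\nu^{1-2d})$ and the sum converges exactly when $d>m/2$; this needs Olver-type uniform asymptotics for $I_\nu K_\nu$ and its derivatives, not just the leading term, and likewise the ``analytic parts'' of the small-$\zeta$ expansions must be summed uniformly over $\nu$. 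In addition, for integer $\nu$ the decomposition via $K_\nu=\frac{\pi}{2\sin(\pi\nu)}(I_{-\nu}-I_\nu)$ degenerates and $\zeta^{2\nu}\log\zeta$ terms appear (harmless for integrability, but your stated structure of $I_\nu K_\nu$ is then wrong). None of this is fatal --- the required estimates are classical --- but as written the key uniformity-in-$\nu$ step is asserted rather than proved, and it is precisely the step the paper's Trace Lemma argument is designed to bypass.
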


\begin{proof}

Let $t\in[0,1]$. Define unitary scaling operator
$$
(U_tf)(r):=t^{\frac12}f(tr), \;\;\;\;\; f\in L^2(0,\infty).
$$
Then
\begin{align*}
U_tr^l=t^lr^lU_t, && U_t\partial_r=t^{-1}\partial_rU_t,
\end{align*}
where $l\in\mathbb{N}$. By (\ref{Laplace on infinite cone}),
$$
U_tT=t^{-2}TU_t,
$$
note that this is true, because $T$ is the Friedrichs extension, \cite[Section~2]{L}.
Define an operator in $L^2((0,\varepsilon),L^2(N))$
$$
T_t:=t^2U_tTU_t^\dag.
$$
By Lemma~\ref{trace_lemma}, $(T_t+z^2)^{-d}$ has a continuous kernel. Denote
\begin{align}\label{tmp sigma}
\sigma_t(r,rz):=\tr_{L^2(N)}(T_t+z^2)^{-d},
\end{align}
in particular $\sigma_1(r,rz)=\sigma(r,rz)$.

We have $U_t(T+z^2)^{-1}=t^2(T_t+(tz)^2)^{-1}U_t$, therefore
\begin{align}\label{operator scaling property}
U_t(T+z^2)^{-d}=t^{2d}(T_t+(tz)^2)^{-d}U_t.
\end{align}
By (\ref{tmp sigma}) and (\ref{operator scaling property}), we obtain the following scaling property 
\begin{align}
\sigma(rt,\zeta)=t^{2d-1}\sigma_t(r,\zeta).
\end{align}
Hence $\sigma(st,s\zeta)=t^{2d-1}\sigma_t(s,\zeta)$. By the chain rule
$$
\frac{\partial}{\partial(st)}\sigma(st,s\zeta)
=\frac{\partial t}{\partial(st)}\frac{\partial}{\partial t}\sigma(st,s\zeta)
=s^{-1}\frac{\partial}{\partial t}\sigma(st,s\zeta),
$$
therefore
$$
\frac{\partial^j}{\partial (st)^j}\sigma(st,s\zeta)
=s^{-j}\frac{\partial^j}{\partial t^j}\sigma(st,s\zeta).
$$
Then
$$
s^j\partial^j\sigma(st,s\zeta)
=\partial^j_t\sigma(st,s\zeta)
=\partial^j_t(t^{2d-1}\sigma_t(s,\zeta)).
$$

Now choose a function $\varphi\in C^{\infty}_c(\mathbb{R})$ such that $\varphi\geq0$ and $\varphi\equiv1$ in the interval $[0,1]$. Then

\begin{align*}
\int_0^1s^j|\sigma^{(j)}(st,s\zeta)|ds
&=\int_0^1|\partial_t^j(t^{2d-1}\sigma_t(s,\zeta))|ds\\
&\leq||\partial_t^j(t^{2d-1}\varphi(T+z^2)^{-d})||_{tr}
\leq C_0,
\end{align*}
where the last inequalities follow from Lemma \ref{trace_lemma}. 
\end{proof}

%%%%%%%%%% SAL %%%%%%%%%%%%%

Now we can apply the Singular Asymptotics Lemma (Lemma \ref{SAL}) to obtain

\begin{prop}\label{applySAL}
\begin{equation}\label{theexpansion}
\begin{split}
\int_0^{\infty}\varphi(r)\sigma(r,\zeta)dr
\sim&\sum_{l=0}^{\infty}z^{-l-1}\frac{1}{l!}\fint_0^{\infty}\zeta^l\partial^l_r\bigg(\sigma(r,\zeta)\varphi(r)\bigg)|_{r=0}d\zeta\\
+&\sum_{j=0}^{\infty}\fint_0^{\infty}\sigma_{j}(r)(rz)^{-2d+m-2j}\varphi(r)dr\\
+&\sum_{l=\frac{m}{2}-d+1}^{\infty}z^{-2d+m-2l}\log z\frac{\partial^{2d-m+2l-1}_r\bigg(\sigma_{l}(r)\varphi(r)\bigg)|_{r=0}}{(2d-m+2l-1)!}.
\end{split}
\end{equation}
\end{prop}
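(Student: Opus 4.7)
The plan is to apply the Singular Asymptotics Lemma (Lemma~\ref{SAL}) to $\sigma(r,\zeta)\varphi(r)$; the preceding material in this section has essentially verified all three hypotheses, so what remains is to identify the terms in the conclusion of Lemma~\ref{SAL} with the three sums in the statement.

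First I would check Hypothesis~(1): smoothness of $\sigma(r,\zeta)$ in $r$ is built into its construction as the trace of a family of resolvents with continuous kernels, and analyticity in $\zeta$ is recorded just before Lemma~\ref{integrability condition} following \cite[Section~3]{BS2}; multiplication by the $C_c^\infty$ cutoff $\varphi$ preserves both. Hypothesis~(2) is essentially the content of the asymptotic expansion (\ref{sigma_expansion})--(\ref{sigma expansion terms}): it provides the exponents $\alpha_j = -2d + m - 2j$ with no logarithmic factor ($J_{\alpha_j}=0$), and the associated Schwartz-class coefficients are the products $\sigma_j(r)\varphi(r)$, which are in fact compactly supported. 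Hypothesis~(3), the integrability condition, was established for $\sigma$ in Lemma~\ref{integrability condition}, and the bounded smooth factor $\varphi$ does not affect this estimate.

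With all three hypotheses in hand, I would invoke Lemma~\ref{SAL} and match its three output sums to those in the proposition. The first sum is a direct transcription with index $k$ renamed to $l$ and $\sigma$ replaced by $\sigma\varphi$. In the second sum, the outer sum over $\alpha$ reduces to a single sum over $j \geq 0$ via the identification $\alpha_j = -2d + m - 2j$, and the inner sum over the log power degenerates since $J_{\alpha_j} = 0$. The third sum in Lemma~\ref{SAL} is restricted to those $\alpha$ that happen to be negative integers; since each $\alpha_j$ is automatically an integer, the condition $\alpha_j \leq -1$ supplies the lower bound $l \geq m/2 - d + 1$, and the substitutions $-\alpha - 1 = 2d - m + 2l - 1$ and $\log^{j+1}z = \log z$ produce exactly the third sum of the proposition.

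The sole nontrivial ingredient is Hypothesis~(3), whose proof in Lemma~\ref{integrability condition} relied on the scaling property of the Friedrichs extension of $T$; with that already established, the present proof is a direct application of Lemma~\ref{SAL} followed by bookkeeping.
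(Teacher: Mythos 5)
Your proposal is correct and follows essentially the same route as the paper: the paper likewise verifies condition (1) by construction, condition (2) via (\ref{sigma_expansion})--(\ref{sigma expansion terms}), and condition (3) via Lemma~\ref{integrability condition}, then states Proposition~\ref{applySAL} as a direct application of Lemma~\ref{SAL} with the same identification $\alpha_j=-2d+m-2j$, $J_{\alpha_j}=0$. Your explicit bookkeeping of the third sum (the lower bound $l\geq m/2-d+1$ and the factorial $(2d-m+2l-1)!$) matches what the paper leaves implicit, including the degenerate case of odd $m$ handled in the paper's subsequent remark.
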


\begin{rema}
If $M$ is an odd-dimensional manifold, the last sum in (\ref{theexpansion}) is zero, because $\sigma_l$ makes sense only for $l\in\mathbb{N}_0$.
\end{rema}

%%%%%%%%% singular coefficients %%%%%%%%%

Since $\varphi(r)\equiv1$ near $r=0$, all its derivatives vanish at zero. Therefore the first sum simplifies and we use Proposition \ref{interior} to obtain

\begin{equation}\label{first summand}
\begin{split}
&\sum_{l=0}^{\infty}z^{-l-1}\frac{1}{l!}\fint_0^{\infty}\zeta^l\partial^l_r\sigma(r,\zeta)|_{r=0}d\zeta\\
=&z^{-2d}\fint_0^{\infty}\zeta^{2d-1}\left(-\frac{1}{2rz}\frac{\partial}{\partial (rz)}\right)^{d-1}\sum_{\nu}I_\nu(rz)K_\nu(rz)d\zeta.
\end{split}
\end{equation}
Note that this term in the resolvent trace expansion gives a contribution to the constant term in the heat trace expansion. We simplify the terms in (\ref{theexpansion}) in the next section.

\end{subsection}

\begin{subsection}{The heat trace expansion}\label{heat trace expansion}

%%%%%%%%% constant term%%%%%%%%%%%

Let $f$ be a meromorphic function with the Laurent series expansion at a point $z_0$
$$
f(z)=\sum_{k=-k_0}^\infty\Res_{-k}f(z_0)(z-z_0)^{k},
$$
where $\Res_{-k}f(z_0)=\frac{1}{2\pi i}\int_{\gamma}\frac{f(z)dz}{(z-z_0)^{k+1}}$, for a positive oriented path $\gamma$ enclosing $z_0$ and lying in an annulus, in which $f(z)$ is holomorphic. By these notations, $\Res_1$ is the residue of the function and $\Res_0$ is the regular analytic continuation. Let $f$ be analytic at $z_0$, and assume that $g$ has a simple pole at $z_0$. Then
\begin{align}\label{residue formula}
\Res_0(fg)(z_0)=f(z_0)\Res_0g(z_0)+f'(z_0)\Res_1g(z_0).
\end{align}

We analyse the first summand in Proposition \ref{applySAL} given by (\ref{first summand}). Denote by $p:=2d-1$. Then by the Mellin transform \cite[p.123]{O}, we obtain
\begin{equation*}
\begin{split}
\fint_0^{\infty}\zeta^{p}\left(-\frac{1}{2\zeta}\frac{\partial}{\partial\zeta}\right)^{d-1}I_\nu(\zeta)K_\nu(\zeta)d\zeta\\
=\frac{1}{4\sqrt{\pi}}\frac{\Gamma(\nu-d+\frac{p+3}{2})\Gamma(d-1-\frac{p}{2})\Gamma(\frac{p+1}{2})}{\Gamma(d+1+\nu-\frac{p+3}{2})}.
\end{split}
\end{equation*}
Set $\frac{p}{2}=l+d-\frac32$ and denote by $\Res_0f(l)|_{l=1}$ the regular analytic continuation of $f(l)$ at $l=1$, i.e. the constant term in the Laurent expansion. Following Definition~\ref{definition of regularized integral},
\begin{align*}
&\fint_0^{\infty}\zeta^{p}\left(-\frac{1}{2\zeta}\frac{\partial}{\partial\zeta}\right)^{d-1}I_\nu(\zeta)K_\nu(\zeta)d\zeta\\
&=\Res_0\left(\frac{1}{4\sqrt{\pi}}\Gamma(-\frac{l}{2})\Gamma(d+\frac{l}{2}-\frac{1}{2})\frac{\Gamma(\nu+\frac{l}{2}+\frac1 2)}{\Gamma(\nu-\frac{l}{2}+\frac{1}{2})}\right)|_{l=1}.
\end{align*}

The ratio of two Gamma functions is given in the proposition below. Let $B_j$ be the Bernoulli numbers and $C_j^i$ be the binomial coefficients
$$
B_0=1, B_j=-\sum_{i=0}^{j-1}C_j^i\frac{B_i}{j-i+1}, \;\;\; j\geq1.
$$
In particular $B_1=-1/2$, $B_2=1/6$, $B_4=-1/30$, $B_6=1/42$ and $B_{2j+1}=0$ for $j\geq1$.

\begin{prop}\label{ratioofGamma}
We have that
$$
\frac{\Gamma(\nu-s+1)}{\Gamma(\nu+s)}
\sim_{\nu\to\infty}
\nu^{1-2s}\bigg(1+s\sum_{j\geq1}j^{-1}B_{2j}\nu^{-2j}\bigg)+O(s^2).
$$
\end{prop}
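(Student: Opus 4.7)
The plan is to reduce the problem to a first-order Taylor expansion in $s$ about $s=0$, then invoke Stirling's asymptotic for the digamma function. First, observe that at $s=0$ the ratio equals $\Gamma(\nu+1)/\Gamma(\nu) = \nu$, so the logarithm of the ratio equals $\log\nu$ at $s=0$. Since the desired error is $O(s^2)$, only the linear term in $s$ needs to be computed. Differentiating and using the chain rule,
\begin{equation*}
\partial_s\log\frac{\Gamma(\nu-s+1)}{\Gamma(\nu+s)}\bigg|_{s=0} = -\psi(\nu+1) - \psi(\nu),
\end{equation*}
where $\psi = \Gamma'/\Gamma$ is the digamma function. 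Hence
\begin{equation*}
\log\frac{\Gamma(\nu-s+1)}{\Gamma(\nu+s)} = \log\nu - s\bigl(\psi(\nu+1) + \psi(\nu)\bigr) + O(s^2).
\end{equation*}

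Next, I would substitute the classical Stirling expansion
\begin{equation*}
\psi(\nu) \sim_{\nu\to\infty} \log\nu - \frac{1}{2\nu} - \sum_{j\geq 1}\frac{B_{2j}}{2j}\nu^{-2j},
\end{equation*}
together with the recursion $\psi(\nu+1) = \psi(\nu) + \nu^{-1}$. The $\pm 1/\nu$ contributions cancel, and the two half-coefficients $B_{2j}/(2j)$ combine to give
\begin{equation*}
\psi(\nu+1) + \psi(\nu) \sim 2\log\nu - \sum_{j\geq 1} j^{-1}B_{2j}\nu^{-2j}.
\end{equation*}
Inserting this into the previous display produces
\begin{equation*}
\log\frac{\Gamma(\nu-s+1)}{\Gamma(\nu+s)} \sim (1-2s)\log\nu + s\sum_{j\geq 1}j^{-1}B_{2j}\nu^{-2j} + O(s^2).
\end{equation*}

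Finally, exponentiating and using $\nu^{1-2s} = e^{(1-2s)\log\nu}$ together with $e^{sX} = 1 + sX + O(s^2)$ applied to $X = \sum_{j\geq 1}j^{-1}B_{2j}\nu^{-2j}$ yields the asserted formula. I do not expect a serious obstacle here: the argument is essentially a bookkeeping exercise with Stirling's series. The one point requiring a remark is that differentiating an asymptotic expansion in the parameter $s$ is legitimate in this setting because Stirling's series for $\log\Gamma$ converges uniformly on compact $s$-neighbourhoods of $0$ with remainders controlled by the next-order term, so truncating the Taylor expansion at order $s$ commutes with the asymptotic as $\nu\to\infty$. The main place for a sign or normalization slip is the cancellation producing $j^{-1}B_{2j}$ (rather than $B_{2j}/(2j)$), a factor of $2$ coming from summing the two digamma values.
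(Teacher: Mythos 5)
Your proof is correct, and it reaches the stated expansion by a slightly different organization of the same Stirling input. The paper applies Stirling's series for $\log\Gamma$ separately to $\Gamma(\nu+s)$ and $\Gamma(\nu-s)$, subtracts, and then linearizes every resulting term in $s$ by hand (expanding $\log(1\pm s/\nu)$ and differencing the $(\nu\pm s)^{-(2j-1)}$ tails), finally restoring the shift via $\Gamma(\nu-s+1)=(\nu-s)\Gamma(\nu-s)$. You instead perform the exact first-order Taylor expansion in $s$ at the outset, which packages the entire linear term into $-s\bigl(\psi(\nu+1)+\psi(\nu)\bigr)$, and only then invoke the asymptotic series for $\psi$; the cancellation of the $\pm 1/\nu$ terms and the doubling of $B_{2j}/(2j)$ into $j^{-1}B_{2j}$ come out exactly as in the paper's messier intermediate display. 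Your route is arguably cleaner: the $O(s^2)$ error is transparently a Taylor remainder $\tfrac{s^2}{2}\bigl(\psi'(\nu-\xi+1)-\psi'(\nu+\xi)\bigr)$, which is even uniformly small in $\nu$, whereas the paper leaves the bookkeeping of the quadratic terms implicit. One small caveat in your justification: Stirling's series does not converge on any neighbourhood (it is a divergent asymptotic series), so the correct statement is that its remainder bounds are uniform on the relevant compact $s$-sets; since your Taylor step in $s$ is exact rather than an interchange of two asymptotic expansions, this does not affect the validity of the argument.
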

\begin{proof}
According to \cite[Chapter XII p.251]{WW} \footnote{Note that in this book an old notation of the Bernoulli numbers is used $B^{old}_j$, in particular $B^{old}_1=1/6$, $B^{old}_2=1/30$, $B^{old}_3=1/42$; and here we use the modern notation $B^{new}_j:=B_j$, they satisfy the relation $B^{old}_j=(-1)^{j-1}B^{new}_{2j}$  and $B^{new}_{2j+1}=0$ for $j\geq1$; that is why our formula is slightly different from the one in the book.},
$$
\log\Gamma(z)\sim_{z\to\infty}
(z-\frac1 2)\log z-z+\frac1 2\log(2\pi)
+\sum_{j\geq1}\frac{1}{2j(2j-1)}\frac{B_{2j}}{z^{2j-1}}.
$$
Consequently,
\begin{equation*}
\begin{split}
\log\Gamma(\nu+s)\sim_{\nu\to\infty}
&(\nu+s-\frac1 2)\log(\nu+s)-(\nu+s)+\frac1 2\log(2\pi)\\
&+\sum_{j\geq1}\frac{1}{2j(2j-1)}\frac{B_{2j}}{(\nu+s)^{2j-1}}\\
=&(\nu+s-\frac1 2)(\log\nu+\log(1+s/\nu))-(\nu+s)+\frac1 2\log(2\pi)\\
&+\sum_{j\geq1}\frac{1}{2j(2j-1)}\frac{B_{2j}}{(\nu+s)^{2j-1}},
\end{split}
\end{equation*}
where for the last equality we use $\log(\nu+s)=\log\nu+\log(1+s/\nu)$.
Analogously, using $\log(\nu-s)=\log\nu+\log(1-s/\nu)$, we obtain
\begin{equation*}
\begin{split}
\log\Gamma(\nu-s)\sim_{\nu\to\infty}
&(\nu-s-\frac1 2)(\log\nu+\log(1-s/\nu))-(\nu-s)+\frac1 2\log(2\pi)\\
&+\sum_{j\geq1}\frac{1}{2j(2j-1)}\frac{B_{2j}}{(\nu-s)^{2j-1}}.
\end{split}
\end{equation*}
Furthermore,
\begin{equation*}
\begin{split}
&\log\frac{\Gamma(\nu-s)}{\Gamma(\nu+s)}
=\log\Gamma(\nu-s)-\log\Gamma(\nu+s)
\sim_{\nu\to\infty}\\
&-2s\log\nu+(\nu-s-\frac1 2)\log(1-\frac{s}{\nu})
-(\nu+s-\frac1 2)\log(1+\frac{s}{\nu})+2s\\
&+\sum_{j\geq1}\frac{1}{2j(2j-1)}B_{2j}
\left(\frac{1}{(\nu-s)^{2j-1}}-\frac{1}{(\nu+s)^{2j-1}}\right)\\
\sim&-2s\log\nu+(\nu-s-\frac1 2)(-\frac{s}{\nu})
-(\nu+s-\frac1 2)(\frac{s}{\nu})+2s\\
&+\sum_{j\geq1}\frac{1}{2j(2j-1)}B_{2j}
\left(\frac{(\nu+s)^{2j-1}-(\nu-s)^{2j-1}}{(\nu^2-s^2)^{2j-1}}\right)+O(s^2)\\
=&-2s\log\nu+\frac{s}{\nu}+s\sum_{j\geq1}j^{-1}B_{2j}\nu^{-2j}+O(s^2).
\end{split}
\end{equation*}

Finally we obtain
\begin{equation*}
\begin{split}
&\frac{\Gamma(\nu-s+1)}{\Gamma(\nu+s)}
=(\nu-s)\frac{\Gamma(\nu-s)}{\Gamma(\nu+s)}\\
&\sim(\nu-s)\nu^{-2s}\left(1+\frac{s}{\nu}+s\sum_{j\geq1}j^{-1}B_{2j}\nu^{-2j}\right)+O(s^2)\\
&\sim\nu^{1-2s}\left(1+s\sum_{j\geq1}j^{-1}B_{2j}\nu^{-2j}\right)+O(s^2).
\end{split}
\end{equation*}

\end{proof}

Using Proposition \ref{ratioofGamma} we compute
\begin{equation}\label{b_1}
\begin{split}
b_1
:=&\Res_0\left(\sum_{\lambda\in\Spec\Delta_N}\frac{\Gamma(-\frac{l}{2})\Gamma(d+\frac{l}{2}-\frac{1}{2})}{4\sqrt{\pi}(d-1)!}\frac{\Gamma(\sqrt{\lambda+(n-1)^2/4}+\frac{l}{2}+\frac1 2)}{\Gamma(\sqrt{\lambda+(n-1)^2/4}-\frac{l}{2}+\frac{1}{2})}\right)|_{l=1}\\
=&\Res_0\Bigg(\frac{\Gamma(-\frac{l}{2})\Gamma(d+\frac{l}{2}-\frac{1}{2})}{4\sqrt{\pi}\Gamma(d)}
\sum_{\lambda\in\Spec\Delta_N}\bigg(\left(\lambda+\frac{(n-1)^2}{4}\right)^{\frac{l}{2}}\\
&-\sum_{j\geq1}j^{-1}B_{2j}\left(\frac{l}{2}-\frac1 2\right)\left(\lambda+\frac{(n-1)^2}{4}\right)^{-j+\frac{l}{2}}\bigg)\Bigg)|_{l=1},
\end{split}
\end{equation}

To find the regular analytic continuation at $l=1$ of the function above we set some notations.

\begin{defi}
Let $h\in\mathbb{R}$. The {\itshape shifted by $h$ spectral zeta function} of $(N,g_N)$ is, by definition,
$$
\zeta^h_N(s):=\sum_{\lambda\in\Spec\Delta_N}\left(\lambda+h^2\right)^{-s}.
$$
\end{defi}

To compute (\ref{b_1}), we need to find the residues of the shifted zeta function. Denote by $a^N_j$, $j\in\mathbb{N}_0$ the heat trace expansion coefficients on the closed manifold $(N,g_N)$
\begin{align}\label{expansion on N}
\tr e^{-t\Delta_N}\sim_{t\to0+}(4\pi t)^{-n/2}\sum_{j=0}^{\infty}a^N_jt^j.
\end{align}

\begin{lemm}\label{poles of shifted zeta}
If $n$ is odd, then $\zeta^h_N(s)$ is a meromorphic function with simple poles at $s=\frac{n}{2}-l$, $l\in\mathbb{N}_0$ with residue 
$$
\Res_1\zeta^h_N\left(\frac{n}{2}-l\right)
=\frac{1}{(4\pi)^{n/2}\Gamma(\frac{n}{2}-l)}\sum_{i=0}^l(-1)^i\frac{h^{2i}}{i!}a^N_{l-i}.
$$
If $n$ is even, then the same holds, but there are no poles for $l=\frac{n}{2}+j, j\in\mathbb{N}_0$.
\end{lemm}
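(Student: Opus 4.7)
The plan is to recover the meromorphic structure of $\zeta^h_N(s)$ from the short-time heat trace expansion (\ref{expansion on N}) via the Mellin transform, shifted to account for the spectral shift $h^2$.

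First I would write, for $\re(s)$ large,
\begin{equation*}
\zeta^h_N(s) = \frac{1}{\Gamma(s)}\int_0^\infty t^{s-1} e^{-t h^2}\tr e^{-t\Delta_N}\,dt,
\end{equation*}
which follows term-by-term from $(\lambda+h^2)^{-s}=\Gamma(s)^{-1}\int_0^\infty t^{s-1}e^{-t(\lambda+h^2)}dt$ and the fact that $\Spec\Delta_N$ is discrete with polynomial counting function. I would then split the integral at $t=1$. The tail $\int_1^\infty$ is entire in $s$ because $\tr e^{-t\Delta_N}$ decays exponentially as $t\to\infty$ and $e^{-th^2}$ is bounded, so all poles come from the interval $[0,1]$.

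Next I would expand both factors inside the integral for $t\in[0,1]$:
\begin{equation*}
e^{-th^2}\tr e^{-t\Delta_N}\sim_{t\to0+}(4\pi)^{-n/2}\sum_{l=0}^\infty t^{l-n/2}\sum_{i=0}^l(-1)^i\frac{h^{2i}}{i!}a^N_{l-i},
\end{equation*}
obtained by Cauchy-multiplying the Taylor series of $e^{-th^2}$ with (\ref{expansion on N}). Replacing the asymptotic series by a finite sum of order $L$ leaves a remainder $O(t^{L-n/2+1})$; the corresponding integral is holomorphic for $\re(s)>n/2-L-1$, so by letting $L\to\infty$ one gets meromorphic continuation to all of $\mathbb{C}$. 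Integrating the $l$-th term of the finite sum yields $\frac{1}{s-n/2+l}$ plus entire contributions, so the meromorphic continuation of $\zeta^h_N(s)$ has the form
\begin{equation*}
\zeta^h_N(s) = \frac{(4\pi)^{-n/2}}{\Gamma(s)}\sum_{l=0}^\infty\frac{1}{s-n/2+l}\sum_{i=0}^l(-1)^i\frac{h^{2i}}{i!}a^N_{l-i}+\frac{E(s)}{\Gamma(s)},
\end{equation*}
where $E(s)$ is entire.

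Reading off the simple poles at $s=n/2-l$ gives the claimed residue formula
\begin{equation*}
\Res_1\zeta^h_N\!\left(\tfrac{n}{2}-l\right)=\frac{1}{(4\pi)^{n/2}\Gamma(n/2-l)}\sum_{i=0}^l(-1)^i\frac{h^{2i}}{i!}a^N_{l-i}.
\end{equation*}
The only remaining point is the evenness dichotomy: when $n$ is even and $l\geq n/2$, the quantity $n/2-l$ is a non-positive integer, at which $\Gamma(n/2-l)^{-1}=0$, so the corresponding pole of the bracketed sum is cancelled by the $1/\Gamma(s)$ factor and $\zeta^h_N$ is actually regular there. When $n$ is odd, $n/2-l$ is never a pole of $\Gamma$, so all the poles survive.

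The main obstacle I expect is the purely bookkeeping step of justifying the interchange of summation (over spectrum and over the Taylor indices $i,j$) with the Mellin integral; the standard approach is the asymptotic-remainder argument sketched above, using the exponential decay of the heat trace at infinity to control the tail, so no genuinely hard analysis is required.
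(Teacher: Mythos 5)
Your proposal is correct and follows essentially the same route as the paper: the Mellin transform of the shifted heat trace, a split of the integral at $t=1$ with an entire tail, term-by-term expansion of $e^{-th^2}$ against the heat trace expansion on $(N,g_N)$, and the observation that the $1/\Gamma(s)$ factor cancels the would-be poles at non-positive integers when $n$ is even. The only slip is the claim that $\tr e^{-t\Delta_N}$ decays exponentially as $t\to\infty$ (it tends to $\dim\ker\Delta_N\geq1$); what is actually needed is exponential decay of the product $e^{-th^2}\tr e^{-t\Delta_N}$, which holds for $h\neq0$ and covers the paper's application.
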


\begin{proof}

Using the Mellin transform we obtain
\begin{align*}
&\zeta^h_N(s)
=\sum_{\lambda\in Spec\Delta_N}\left(\lambda+h^2\right)^{-s}
=\frac{1}{\Gamma(s)}\sum_{\lambda\in Spec\Delta_N}\int_0^\infty t^{s-1}e^{-t(\lambda+h^2)}dt\\
=&\frac{1}{\Gamma(s)}\sum_{\lambda\in Spec\Delta_N}\int_0^1 t^{s-1}e^{-t\lambda}e^{-th^2}dt
+\frac{1}{\Gamma(s)}\sum_{\lambda\in Spec\Delta_N}\int_1^\infty t^{s-1}e^{-t(\lambda+h^2)}dt.
\end{align*}
The latter integral is an entire function in $s$. To compute the first integral we use the Taylor expansion
$$
e^{-th^2}\sim_{t\to0}\sum_{i=0}^\infty(-1)^i\frac{h^{2i}}{i!}t^i,
$$
and the heat trace expansion (\ref{expansion on N})). Since Gamma function is nowhere zero, we obtain
\begin{align*}
Res_1\zeta^h_N(s_0)
=&Res_1\left(\frac{1}{(4\pi)^{n/2}\Gamma(s_0)}\sum_{i=0}^\infty(-1)^i\frac{h^{2i}}{i!}\sum_{j=0}^{\infty}\frac{a^N_j}{(s_0+i+j-\frac{n}{2})}\right).
\end{align*}
If $n$ is odd, poles are at $s_0=\frac{n}{2}-i-j$, for $i,j\in\mathbb{N}_0$. If $n$ is even, poles are at $s_0=\frac{n}{2}-i-j$, for $i+j<\frac{n}{2}$, because the Gamma function has simple poles at non-positive integers. Set $l:=i+j$, then

\begin{align*}
Res_1\zeta^h_N\left(\frac{n}{2}-l\right)
=&\frac{1}{(4\pi)^{n/2}\Gamma(\frac{n}{2}-l)}\sum_{i=0}^l(-1)^i\frac{h^{2i}}{i!}a^N_{l-i}.
\end{align*}
\end{proof}

Note that this agrees with

\begin{prop}\cite[p.3]{V}\label{poles on compact}
The spectral zeta function on $(N,g_N)$, $\zeta_N(s)$, is a meromorphic function with simple poles at $s=\frac{n}{2}-l$ for $l\in\mathbb{N}_0$ with the residue
$$
Res_1\zeta_N(s)=\frac{a^N_{\frac{n}{2}-s}}{(4\pi)^{n/2}\Gamma(s)}.
$$
\end{prop}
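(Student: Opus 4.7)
The plan is to recognize this proposition as the special case $h=0$ of Lemma \ref{poles of shifted zeta} just proved, and to run essentially the same Mellin transform argument but in a cleaner form. Concretely, I would start from the identity
$$
\zeta_N(s) = \frac{1}{\Gamma(s)} \int_0^\infty t^{s-1} \tr(e^{-t\Delta_N})\, dt,
$$
valid for $\re(s) > n/2$ by dominated convergence and Fubini, since $\sum_\lambda \lambda^{-s}$ converges absolutely in that half-plane by Weyl's law on the closed manifold $(N,g_N)$.

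Next I would split the integral at $t=1$. On $[1,\infty)$, since $\Delta_N$ has discrete spectrum with $\lambda_1 \geq 0$ and the heat trace decays as $\tr(e^{-t\Delta_N}) \leq \dim\ker\Delta_N + C e^{-\lambda_+ t}$ for the smallest strictly positive eigenvalue $\lambda_+$, the integral $\int_1^\infty t^{s-1}\tr(e^{-t\Delta_N})\,dt$ is entire in $s$ (the constant contribution from $\ker\Delta_N$, if any, is handled by the prefactor $1/\Gamma(s)$ and produces only the trivial pole-cancelling behaviour already accounted for). The $1/\Gamma(s)$ prefactor therefore contributes an entire function to $\zeta_N(s)$ from this tail.

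On $[0,1]$ I would insert the local heat expansion (\ref{smooth expansion}) on the closed manifold $(N,g_N)$, namely $\tr(e^{-t\Delta_N}) \sim (4\pi t)^{-n/2} \sum_{j\geq 0} a^N_j t^j$, with a uniform remainder $O(t^{K+1-n/2})$ after $K$ terms. Term-by-term integration of the $j$-th summand gives
$$
\frac{1}{\Gamma(s)}\int_0^1 t^{s-1+j-n/2}\, dt = \frac{1}{\Gamma(s)(s+j-n/2)},
$$
valid initially for $\re(s)$ large and then meromorphically on $\mathbb{C}$. The remainder term, after the same prefactor, is holomorphic in a half-plane $\re(s) > n/2 - K - 1$, so letting $K\to\infty$ yields the full meromorphic continuation
$$
\zeta_N(s) = \frac{1}{(4\pi)^{n/2}\Gamma(s)} \sum_{j=0}^{\infty} \frac{a^N_j}{s + j - n/2} + \text{(entire)}.
$$
Setting $l = j$, the simple poles sit at $s = n/2 - l$, and the residue is $a^N_l / \bigl((4\pi)^{n/2}\Gamma(n/2-l)\bigr) = a^N_{n/2-s}/\bigl((4\pi)^{n/2}\Gamma(s)\bigr)$ evaluated at $s = n/2 - l$, as claimed.

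The only real subtlety is the justification of exchanging the asymptotic sum with the integral; this is routine given the uniform remainder bound in Theorem \ref{local_heat_expansion} integrated over the compact manifold $N$. When $n$ is even and $l \geq n/2$, the factor $1/\Gamma(s)$ has a simple zero at $s = n/2 - l \in \{0,-1,-2,\dots\}$ which cancels the pole, recovering the parenthetical statement in Lemma \ref{poles of shifted zeta}, so the residue formula as written agrees in all cases (vanishing automatically through $1/\Gamma$).
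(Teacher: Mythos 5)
Your argument is correct and is essentially the same as the paper's: the paper does not prove this proposition directly (it is cited from Voros) but establishes it as the $h=0$ specialization of Lemma~\ref{poles of shifted zeta}, whose proof is exactly your Mellin-transform-and-split-at-$t=1$ computation. The only point worth keeping in mind is that for $h=0$ the zero eigenvalue must be excluded from the sum defining $\zeta_N(s)$ (otherwise the tail integral $\int_1^\infty t^{s-1}\,dt$ diverges), which you flag only in passing, but this does not affect the pole locations or residues.
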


We now compute (\ref{b_1}). Set
$$
f(s):=\frac{\Gamma(\frac12+s)\Gamma(d-s-1)}{4\sqrt{\pi}\Gamma(d)}.
$$
Denote also
$$
g(s):=-\sum_{j\geq1}j^{-1}B_{2j}\zeta^{\frac{n-1}{2}}_N(j+s/2).
$$
Since $f(-1)=-\frac1 2$ and $\Gamma(-\frac1 2)=-2\sqrt{\pi}$ and $f'(-1)=\frac{\Gamma'(-\frac1 2)}{4\sqrt{\pi}}+\frac{\Gamma'(d)}{2\Gamma(d)}$, by (\ref{b_1}) and (\ref{residue formula}), we have
\begin{equation*}
\begin{split}
b_1
=&\Res_0\left(f(s)\zeta^{\frac{n-1}{2}}_N(s/2)+f(s)\left(-\frac{s}{2}-\frac1 2\right)g(s)\right)|_{s=-1}\\
=&-\frac{1}{2}\Res_0\zeta^{\frac{n-1}{2}}_N(-1/2)\\
&+\left(\frac{\Gamma'(-\frac1 2)}{4\sqrt{\pi}}+\frac{\Gamma'(d)}{2\Gamma(d)}\right)\Res_1\zeta^{\frac{n-1}{2}}_N(-1/2)
+\frac1 4\Res_1g(-1).
\end{split}
\end{equation*}
By Lemma~\ref{poles of shifted zeta},

\begin{equation}\label{b2m}
\begin{split}
b_1
=&-\frac{1}{2}\Res_0\zeta^{\frac{n-1}{2}}_N(-1/2)
+\left(\frac{\Gamma'(-\frac1 2)}{4\sqrt{\pi}}+\frac{\Gamma'(d)}{2\Gamma(d)}\right)\Res_1\zeta^{\frac{n-1}{2}}_N(-1/2)\\
&-\frac1 4\sum_{1\leq j\leq\frac{n+1}{2}}j^{-1}B_{2j}\Res_1\zeta^{\frac{n-1}{2}}_N(j-1/2).
\end{split}
\end{equation}
If $n$ is even, the above sum is to be interpreted as sum over all integers $1\leq j\leq\frac{n+1}{2}$.

%%%%%%%%% smooth coefficients %%%%%%%%%%%

Now consider the second sum in the expansion (\ref{theexpansion}). By (\ref{sigma expansion terms}),

\begin{equation}\label{tmp summation}
\begin{split}
&\sum_{j=0}^{\infty}
\fint_0^{\infty}\sigma_{j}(r)(rz)^{-2d+m-2j}\varphi(r)dr\\
=&(4\pi)^{-\frac{m}{2}}
\sum_{j=0}^{\infty}
\frac{\Gamma(-\frac{m}{2}+d+j)}{(d-1)!}z^{-2d+m-2j}\fint_0^{\infty}r^{m-1}\varphi(r)\int_N u_j(p)\dvol_Ndr.
\end{split}
\end{equation}
By \cite[Chapter III, Lemma E.IV.5]{BGM}, $u_j(r,x)=r^{-2j}\hat{u}_j(r,x)$, where $\hat{u}_j(r,x)$ is smooth with respect to $r$. Hence for $m-1-2j\geq0$, the integrals in the above sum need no regularisation, i.e. for $j\leq m/2-1$ we have 
$$
\fint_0^{\infty}r^{m-1}\varphi(r)\int_N u_j(p)\dvol_Ndr=\int_0^{\infty}r^{m-1}\varphi(r)\int_N u_j(p)\dvol_Ndr.
$$
The sum (\ref{tmp summation}) gives the following terms in the heat trace expansion
\begin{equation}\label{proof of claim 1}
\begin{split}
(4\pi)^{-\frac{m}{2}}
&\sum_{j=0}^{\infty}t^{-\frac{m}{2}+j}\fint_0^{\infty}r^{m-1}\varphi(r)\int_N u_j(p)\dvol_Ndr\\
=(4\pi)^{-\frac{m}{2}}
&\sum_{j=0}^{\infty}t^{-\frac{m}{2}+j}\fint_Mu_{j}(p)\varphi(r)\dvol_M,
\end{split}
\end{equation}
where summands with $j\leq m/2-1$ need no regularisation.

We sum this with the expansion away from the singularity to obtain
\begin{equation*}
\begin{split}
(4\pi)^{-\frac{m}{2}}
&\sum_{j=0}^{\infty}t^{-\frac{m}{2}+j}\bigg(\fint_Mu_{j}(p)\varphi(r)\dvol_M
+\int_Mu_{j}(p)(1-\varphi(r))\dvol_M\bigg)\\
&=\sum_{j=0}^{\infty}t^{-\frac{m}{2}+j}\fint_Mu_{j}(p)\dvol_M.
\end{split}
\end{equation*}

%%%%%%%%% log coefficients %%%%%%%%%%%%

Now we simplify the logarithmic terms in Proposition \ref{applySAL}

\begin{equation*}
\begin{split}
L:=&\sum_{j=\frac{m}{2}-d+1}^{\infty}
z^{-2d+m-2j}\log z\frac{\partial^{2d-m+2j-1}_r\bigg(\sigma_{j}(r)\varphi(r)\bigg)|_{r=0}}{(2d-m+2j-1)!}\\
=&\sum_{j=\frac{m}{2}-d+1}^{\infty}
z^{-2d+m-2j}\log z\frac{\Gamma(-\frac{m}{2}+d+j)}{(d-1)!(2d-m+2j-1)!}\times\\
&\times(4\pi)^{-\frac{m}{2}}
\partial^{2d-m+2j-1}_r\bigg(r^{2d-1+2j}\int_N u_j(p)\dvol_N\bigg)|_{r=0}.
\end{split}
\end{equation*}
To continue the computation we note that for $j\geq m/2$ we have

\begin{equation*}
\begin{split}\partial^{2d-m+2j-1}_r\bigg(r^{2d-1+2j}\int_N u_j(p)\dvol_N\bigg)|_{r=0}\\
=\partial^{2j-m}_r\bigg(r^{2j}\int_N u_j(p)\dvol_N\bigg)|_{r=0},
\end{split}
\end{equation*}
therefore

\begin{equation*}
\begin{split}
L=&\sum_{j=\frac{m}{2}}^{\infty}
z^{-2d+m-2j}\log z\frac{\Gamma(-\frac{m}{2}+d+j)}{(d-1)!(2j-m)!}\times\\
&\times(4\pi)^{-\frac{m}{2}}
\partial^{2j-m}_r\bigg(r^{2j}\int_N u_j(p)\dvol_N\bigg)|_{r=0}\\
=&\sum_{l=0}^{\infty}
z^{-2d-2l}\log z\frac{\Gamma(d+l)}{(d-1)!(2l)!}(4\pi)^{-\frac{m}{2}}
\partial^{2l}_r\bigg(r^{m+2l}\int_N u_{\frac{m}{2}+l}(p)\dvol_N\bigg)|_{r=0}.
\end{split}
\end{equation*}

Hence the logarithmic part in the heat trace expansion coming from this term is

\begin{align*}
-(4\pi)^{-\frac{m}{2}}
\sum_{i=0}^{\infty}t^i
\log t\times
\frac{1}{2(2i)!}\partial^{2i}_r\bigg(r^{m+2i}\int_{N}u_{\frac{m}{2}+i}(p)\dvol_N\bigg)|_{r=0}.
\end{align*}

For $i>0$, the function $r^{m+2i}\int_{N}u_{\frac{m}{2}+i}(p)\dvol_N$ is smooth and has no terms of order $r^{2i}$. Hence the only nonzero logarithmic term may appear for $i=0$
\begin{equation}\label{proof of claim 3}
c:=-(4\pi)^{-\frac{m}{2}}\frac{1}{2}\bigg(r^{m}\int_Nu_{\frac{m}{2}}(p)\dvol_N\bigg)|_{r=0}\\
=\frac12\Res_1\zeta_N^{\frac{n-1}{2}}(-1/2).
\end{equation}

From the logarithmic term in the resolvent trace expansion we also get the following contribution to the constant term in the heat trace expansion

\begin{align}\label{b_2}
b_2
&=(4\pi)^{-\frac{m}{2}}\frac{1}{2}\frac{\Gamma'(d)}{\Gamma(d)}\bigg(r^{m}\int_Nu_{\frac{m}{2}}(p)dvol_N\bigg)|_{r=0}=-\frac12\frac{\Gamma'(d)}{\Gamma(d)}Res_1\zeta^{\frac{n-1}{2}}_N(-1/2).
\end{align}

Let $b:=b_1+b_2$. By (\ref{b2m}) and (\ref{b_2}),

\begin{equation}\label{constant term}
\begin{split}
b
=&-\frac{1}{2}\Res_0\zeta^{\frac{n-1}{2}}_N(-1/2)
+\frac{\Gamma'(-\frac1 2)}{4\sqrt{\pi}}\Res_1\zeta^{\frac{n-1}{2}}_N(-1/2)\\
&-\frac1 4\sum_{1\leq j\leq\frac{n+1}{2}}j^{-1}B_{2j}\Res_1\zeta^{\frac{n-1}{2}}_N(j-1/2).
\end{split}
\end{equation}

\end{subsection}

\begin{subsection}{Proof of Theorem \ref{main theorem}}
\label{proof of main theorem}

Now we are ready to prove Theorem \ref{main theorem}.

\begin{proof}[Proof of Theorem~\ref{main theorem}]

First, we show that if $m\geq4$, then the Laplace-Beltrami operator on $(M,g)$ is essentially self-adjoint. By (\ref{Laplace on infinite cone}),
$$
\Delta=-\partial_r^2+r^{-2}\left(\frac{n}{2}\left(\frac{n}{2}-1\right)+\Delta_N\right),
$$
where $n=m-1$ and $\Delta_N$ is the Laplace-Beltrami operator on $(N,g_N)$. By \cite[pp.703-704]{BS3}, $\Delta$ is essentially self-adjoint if
$$
\left(\frac{n}{2}\left(\frac{n}{2}-1\right)+\Delta_N\right)\geq\frac34,
$$
equivalently,
$$
n\geq3,
$$
otherwise $\Delta$ might have many self-adjoint extensions.

(a) By (\ref{proof of claim 1}), (\ref{proof of claim 3}) and (\ref{constant term}), we obtain the final formula

\begin{equation}
\tr e^{-t\Delta}\sim_{t\to0+}(4\pi t)^{-\frac{m}{2}}
\sum_{j=0}^{\infty}\tilde{a}_jt^j
+b
+c\log t.
\end{equation}
Above for $j\leq m/2-1$, we have $\tilde{a}_j=\int_Mu_j(p)\dvol_M$. For \\* $j>m/2-1$, we have the reguralized integrals $\tilde{a}_j=\fint_Mu_j(p)\dvol_M$.

(b) The constant term $b$ is given by (\ref{constant term}).

In the subsequent article we show that the constant term $b_{S^n}$ in the heat trace expansion on $(M_1,g_1)$ with the cross-section $(N,g_N)=(S^n,g_{\text{round}})$ is equal to zero. We also show that the constant term $b_{\mathbb{R}P^n}$ in the heat trace expansion on $(M_2,g_2)$ with the cross-section $(N,g_N)=(\mathbb{R}P^n,g_{\text{round}})$ is non-zero. Observe, that $S^n$ is the covering of $\mathbb{R}P^n$. By the first claim of this theorem, $\tilde{a}_j$ has an expression by the integral of the local data that is the curvature and its derivatives. Assume that the constant term $b$ can be written as an integral over local data. Then $b$ satisfies the multiplicative law for coverings, hence $b_{S^n}=2b_{\mathbb{R}P^n}$. It is a contradiction. We conclude that in general there is no expression of $b$ as an integral of local data.

(c) By (\ref{proof of claim 3}), we get
$$
c=\frac12\Res_1\zeta^{\frac{m-2}{2}}_N(-1/2).
$$
By Lemma \ref{poles of shifted zeta},
\begin{align*}
\Res_1\zeta^{\frac{m-2}{2}}_N(s)
=\Res_1\left(\frac{1}{(4\pi)^{\frac{m-1}{2}}\Gamma(s)}\sum_{k=0}^\infty(-1)^k\frac{(m-2)^{2k}}{2^{2k}k!}\sum_{j=0}^{\infty}\frac{a^N_j}{(s+k+j-\frac{m-1}{2})}\right).
\end{align*}

Setting $s=-\frac12$, we get the relation between the summation indices
$$
k+j=\frac{m}{2},
$$
hence 
\begin{align}\label{residue at -1/2}
\Res_1\zeta^{\frac{m-2}{2}}_N(-1/2)
=&\frac{1}{(4\pi)^{\frac{m-1}{2}}\Gamma(-\frac12)}\sum_{k=0}^{\frac{m}{2}}(-1)^k\frac{(m-2)^{2k}}{2^{2k}k!}a^N_{\frac{m}{2}-k},
\end{align}
and we simplify
\begin{align*}
\Res_1\zeta^{\frac{m-2}{2}}_N(-1/2)
=&-\frac{1}{(4\pi)^{\frac{m}{2}}}\sum_{k=0}^{\frac{m}{2}}(-1)^k\frac{(m-2)^{2k}}{2^{2k}k!}a^N_{\frac{m}{2}-k}\\
=&\frac{1}{(4\pi)^{\frac{m}{2}}}\sum_{k=0}^{\frac{m}{2}}(-1)^{k+1}\frac{(m-2)^{2k}}{2^{2k}k!}a^N_{\frac{m}{2}-k}.
\end{align*}
If $m$ is odd we have $a^N_{\frac{m}{2}-k}=0$ for $0\leq k<\frac{m}{2}$.

(d) Assume that $c=0$, then by (\ref{proof of claim 3}),
$$
\bigg(r^{m}\int_Nu_{\frac{m}{2}}(r,x)\dvol_N\bigg)|_{r=0}=0.
$$
Hence
\begin{align*}
\tilde{a}_{\frac{m}{2}}
=&\fint_Mu_{\frac{m}{2}}(p)\dvol_M
=\fint_{U}u_{\frac{m}{2}}(p)\dvol_M+\int_{M\setminus U}u_{\frac{m}{2}}(p)\dvol_M\\
=&\int_{M\setminus U}u_j(p)\dvol_M
=\int_Mu_j(p)\dvol_M.
\end{align*}
This finishes the proof.
\end{proof}

\end{subsection}

\end{section}


\begin{thebibliography} {3}
\bibitem[BGM]{BGM} M.\,Berger, P.\,Gauduchon, E.\,Mazet, {\em Le spectre d'une vari\'et\'e riemannienne}, Lect. Notes Math. 194, Springer-Verlag, Berlin-Heidelberg-New\;York (1971).
\bibitem[BS]{BS} J.\,Br\"uning, R.\,Seeley, {\em Regular singular asymptotics}, Adv.~in Math. 58 (1985), 133--148.
\bibitem[BS2]{BS2} J.\,Br\"uning, R.\,Seeley, {\em The resolvent expansion for second order regular singular operators}, J.\,Funct. Anal. 73 (1987), 369--429.
\bibitem[BS3]{BS3} J.\,Br\"uning, R.\,Seeley, {\em An index theorem for first order regular singular operators}, Am. J. Math. 110 (1988), 659--714.
\bibitem[BKD]{BKD}M.\,Bordag, K.\,Kirsten, S.\,Dowker, {\em Heat kernels and functional determinants on the generalized cone}, Commun. Math. Phys. 182 (1996), 371--394.
\bibitem[Ch]{Ch} J.\,Cheeger, {\em On the spectral geometry of spaces with cone-like singularities}, Proc. Nat. Acad. Sci. USA 76 (1979), 2103--2106.
\bibitem[Ch2]{Ch2} J.\,Cheeger, {\em Spectral geometry of singular Riemannian spaces}, J. Differential Geom. 18 (1983), no. 4, 575--657.
\bibitem[DS]{DS} N.\,Dunford, J.\,Schwartz, {\em Linear operators, Part II}, Interscience, New York (1963).
\bibitem[G]{G} P.\,Gilkey, {\em Invariance theory, the heat equation, and the Atiyah-Singer index theorem}, 2nd ed., Stud. Adv. Math., CRC Press , Boca Raton, FL (1995).
\bibitem[G2]{G2} P.\,Gilkey, {\itshape Asymptotic formulae in spectral geometry}, Stud. Adv. Math., CRC Press, Boca Raton, FL (2004).
\bibitem[GPS]{GPS} C.\,Gordon, P.\,Perry, D.\,Sch\"uth, {\itshape Isospectral and isoscattering manifolds: a survey of techniques and examples}, Contemp. Math. 387 (2005), 157--179.
\bibitem[KM]{KM} A.\,Kriegl, P.\,Michor, {\em The convenient setting of global analysis}, Mathematical Surveys and Monographs, 53, Am. Math. Soc., Providence, RI (1997).
\bibitem[L]{L} M.\,Lesch, {\em Operators of Fuchs type, conical singularities, and asymptotic methods}, Teubner-Texte zur Mathematik, 136, Tuebner-Verlag, Leipzig (1997).
\bibitem[O]{O} F.\,Oberhettinger, {\em Tables of Mellin transforms}, Springer-Verlag, Berlin (1974).
\bibitem[V]{V} A.\,Voros, {\em Spectral functions, special functions and the Selberg zeta function}, Comm. Math. Phys., 110 (1987), 439--465.
\bibitem[W]{W} J.\,Weidmann, {\em Linear operators in Hilbert spaces}, Springer-Verlag, Berlin (1980).
\bibitem[WW]{WW} E.\,Whittaker, G.\,Watson, {\em A course of modern analysis}, Cambridge Univ. Press (1996).
\end{thebibliography}
\end{document}